\newtheorem{theorem}{Theorem}[section]
\newtheorem{lemma}[theorem]{Lemma}
\newtheorem{corollary}[theorem]{Corollary}
\theoremstyle{definition}
\newtheorem{definition}[theorem]{Definition}
\newtheorem{example}[theorem]{Example}
\theoremstyle{remark}
\newtheorem{remark}[theorem]{Remark}
\numberwithin{equation}{section}
\begin{document}

\title{Katugampola fractional integral and fractal dimension of bivariate functions}

%    Information for first author

%    Address of record for the research reported here

%    \thanks will become a 1st page footnote.
%\thanks{The first author was supported in part by NSF Grant \#000000.}

%    Information for second author
\author{S. Verma}
\address{Department of Mathematics, IIT Delhi, New Delhi, India 110016 }
\email{saurabh331146@gmail.com}

\author{P. Viswanathan}
%    Address of record for the research reported here
\address{Department of Mathematics, IIT Delhi, New Delhi, India 110016}
%    Current address

\email{viswa@maths.iitd.ac.in}

%    General info
%\subjclass[2000]{Primary 54C40, 14E20; Secondary 46E25, 20C20}

%\date{January 1, 2001 and, in revised form, June 22, 2001.}

%\dedicatory{This paper is dedicated to our advisors.}

\keywords{Fractional integral, Hausdorff dimension, Box dimension, Bounded variation}
\footnote{This work is a part of the Ph.D. thesis of the first author submitted to IIT Delhi. The thesis is defended successfully in October 2020. This paper is submitted to a journal for possible publication.}
\begin{abstract}
The subject of this note is the mixed Katugampola fractional integral of a bivariate function defined on a rectangular region in the Cartesian plane.  This is a natural extension of the Katugampola fractional integral of a univariate function - a  concept well-received in the recent literature on fractional calculus and its applications. It is shown that the mixed Katugampola fractional integral of a prescribed bivariate function preserves  properties such as boundedness, continuity and bounded variation of the function. Furthermore, we estimate fractal dimension of the graph of the mixed Katugampola integral of a continuous bivariate function. Some examples for  bivariate functions that are not of bounded variation but with graphs having box dimension $2$ are constructed.  The findings in the current  note may be viewed as a sequel to our work reported in [Appl. Math. Comp., 339, 2018, pp. 220-230].
\end{abstract}

\maketitle

%%%%%%%%%%%%%%%%%%%%%%%%%%%%%%%%%%%%%%%%%%%%%%%%%%%%%%%%%%%%%%%%%%%%%%%%

%%%%%%%%%%%%%%%%%%%%%%%%%%%%%%%%%%%%%%%%%%%%%%%%%%%%%%%%%%%%%%%%%%%%%%%%
.

\section{INTRODUCTION}
Fractal Geometry (FG) and Fractional Calculus (FC) have become rapidly growing fields  in theory and  applications. FG is the mathematical study of the concepts of self-similarity, fractals, chaos and their applications to the modeling of natural phenomena.
FC is the field of mathematical analysis which deals with the possibility of taking noninteger order differentiation and integration. The present  work lies broadly  in the intersection of these two fields.

\par

Due to its applications in  diverse
 fields of scientific knowledge, FC experienced a fast development  in recent decades. For a short review of various definitions of fractional derivatives and integrals we refer to \cite{OM}, and the book \cite{Samko} for a detailed  treatment.  To the best of our knowledge, the fundamental question that whether it is possible to define a general class of fractional integral/derivative operators that encompass all the standard fractional operators is still open.

\par Two forms of the fractional integral that have been studied extensively for their applications  are the Riemann-Liouville fractional integral and  Hadamard fractional integral. In reference \cite{Katu}, Katugampola provided a new fractional integral of a univariate function which includes both the Riemann-Liouville and Hadamard fractional integrals. See also \cite{KST} for a more general definition of the  fractional integral of a function  with respect to
another function. In the first part of this paper, we extend this to bivariate function and introduce the mixed Katugampola integral for bivariate functions analogous to the mixed Riemann-Liouville integral; see, for instance, \cite{Samko}.

\par The quest for the geometric and physical interpretations of fractional-order operators and linguistical reasons motivated  researchers to explore the
 relation between FC and FG. Although this may not be the primary purpose, in reference \cite{Liang}, Liang obtained the box dimension of the graph of the Riemann-Liouville fractional integral of a continuous function of bounded variation. Results connecting fractional integrals and fractal dimension can be consulted also in \cite{LS,YLS,RSY}.  A counterpart of the result in \cite{Liang} for the Katugampola fractional integral was given later  by the authors in \cite{SV}. The focus in references \cite{Liang,SV} is on the univariate functions and it is not known until now whether the results therein have straightforward extensions to higher dimensions.

 \par
In the second part of this note, we investigate the fractal dimensions 
(Hausdorff dimension and box dimension) of the graph of the mixed Katugampola fractional integral of a continuous bivariate function of bounded variation defined on a rectangle $[a,b] \times [c,d]$ in $\mathbb{R}^2$.
 It is worth to mention that in the context of multivariate functions there are different notions of bounded variation (see, for instance, \cite{James}).  However,  we confine our discussion to  bounded variation in the sense of Arzel\'{a}. While the current note,  as alluded earlier, shares a natural kinship with \cite{SV}, the reader can also recognize a considerable
degree of distinction between them.

 \par We make no claim about the novelty of the techniques used to prove our results. In fact, most of the proofs herein are elementary, rely on  the standard tools in mathematical analysis, and  follow from relevant known results and definitions.
We acknowledge that this paper is a part of the Ph.D. thesis \cite{SVthesis} of the first author submitted to IIT Delhi.

 \section{Background and preliminaries}

 In this section we gather some preliminary definitions and facts that are needed in the sequel.

As hinted in the introductory section, there are several ways to extend the notion of bounded variation to multivariate functions, see \cite{James} for a review of different approaches for functions of two variables. In what follows, we shall recall only the bounded variation of a bivariate function in the sense of Arzel\'{a}.  We shall refer to a function that is not of bounded variation as a function of unbounded variation.
 \begin{definition}\cite{James}
    Let $f :[a,b] \times [c,d] \rightarrow \mathbb{R}$ be a function and $(x_i,y_j) ~~~~(i,j=0,1,2, \dots ,m)$ be a set of points satisfying the conditions $$ a=x_0 \le x_1 \le x_2 \le \dots \le x_m=b;$$
    $$ c =y_0 \le y_1 \le y_2 \le \dots \le y_m=d.$$ Let $ \Delta f(x_i,y_i)=f(x_{i+1},y_{i+1})-f(x_i,y_i).$
    The function $f(x,y)$ is said to be of bounded variation in the sense of Arzel\'{a} if the sum $$ \sum_{i=0}^{m-1} | \Delta f(x_i,y_i)|$$ is bounded for all such sets of points.
\end{definition}

  \begin{theorem}\cite{Raymond} \label{BVK_Arzela}
      A necessary and sufficient condition for a function $f: [a,b] \times [c,d] \rightarrow \mathbb{R}$ is  of bounded variation in Arzel\'{a}'s sense is the following.  The function $f$  can be expressed as the difference between two bounded functions $f_1$ and $f_2$ satisfying the inequalities $$ \Delta_{10} f_i(x,y) \ge 0, \quad  \Delta_{01} f_i(x,y) \ge 0, \quad \quad i = 1,2,$$
     where $ \Delta_{10} f(x_i,y_j)=f(x_{i+1},y_j)-f(x_i,y_j),$
      $ \Delta_{01} f(x_i,y_j)=f(x_i,y_{j+1})-f(x_i,y_j).$
      \end{theorem}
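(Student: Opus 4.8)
The statement is a two–variable analogue of the classical Jordan decomposition of a function of bounded variation, so the plan is to prove the two implications separately, regarding the construction of the decomposition (necessity) as the substantial half. For sufficiency, suppose $f = f_1 - f_2$ with each $f_i$ bounded and satisfying $\Delta_{10} f_i \ge 0$ and $\Delta_{01} f_i \ge 0$. The first observation is that for any such $g$ and any admissible set of points each increment splits as
$$ g(x_{i+1},y_{i+1}) - g(x_i,y_i) = \big[g(x_{i+1},y_{i+1}) - g(x_i,y_{i+1})\big] + \big[g(x_i,y_{i+1}) - g(x_i,y_i)\big] \ge 0, $$
the two bracketed terms being nonnegative by $\Delta_{10} g \ge 0$ and $\Delta_{01} g \ge 0$ respectively. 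Hence the sum $\sum_i \abs{g(x_{i+1},y_{i+1}) - g(x_i,y_i)}$ telescopes to $g(b,d) - g(a,c)$, a bound independent of the point–set. Applying this to $g = f_1$ and $g = f_2$ and invoking the triangle inequality bounds $\sum_i \abs{\Delta f(x_i,y_i)}$ by $[f_1(b,d)-f_1(a,c)] + [f_2(b,d)-f_2(a,c)]$, so $f$ is of bounded variation in Arzel\'{a}'s sense.

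For necessity I would imitate the one–variable argument via a variation function. Define $V(x,y)$ to be the supremum of $\sum_i \abs{\Delta f(x_i,y_i)}$ taken over all admissible staircase point–sets running from $(a,c)$ to $(x,y)$; by hypothesis $V(x,y) \le V(b,d) < \infty$, so $V$ is bounded, and since the two–point set $(a,c),(x,y)$ is admissible we get $\abs{f(x,y) - f(a,c)} \le V(x,y)$, whence $f$ is bounded as well. The key estimate to establish is that, for $x' \ge x$,
$$ V(x',y) - V(x,y) \ge \abs{f(x',y) - f(x,y)}, $$
together with its symmetric counterpart in the second variable. I would prove this by taking a near–optimal point–set for $V(x,y)$ and appending the single step $(x,y) \to (x',y)$: the extended point–set competes in the supremum defining $V(x',y)$ and contributes the extra term $\abs{f(x',y)-f(x,y)}$. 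Setting $f_1 = V$ and $f_2 = V - f$, this estimate immediately yields $\Delta_{10} f_2 \ge 0$ and $\Delta_{01} f_2 \ge 0$, while the same estimate with the absolute value dropped gives the monotonicity $\Delta_{10} f_1 \ge 0$ and $\Delta_{01} f_1 \ge 0$; both components are bounded and $f = f_1 - f_2$, completing the decomposition.

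The crux, and the step I expect to require the most care, is the appended–step inequality in the necessity part: I must verify that augmenting an optimal staircase for $V(x,y)$ by the horizontal move to $(x',y)$ again produces an \emph{admissible} point–set in the sense of the Definition, i.e.\ that both coordinate sequences remain nondecreasing — which holds precisely because the second coordinate is held fixed at $y$ — and that the supremum over the larger region genuinely dominates the augmented sum. Some attention is also needed in defining $V$ consistently along the boundary edges $x=a$ and $y=c$ and in confirming that the decomposition $f = V - (V-f)$ delivers \emph{bounded} components, but these points are routine once the monotonicity estimate for $V$ is established.
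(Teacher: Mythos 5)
Your argument is correct. The paper does not prove this theorem at all — it is quoted verbatim from the cited reference of Adams and Clarkson — so there is no in-paper proof to compare against; your two-step argument (sufficiency by splitting each staircase increment of a monotone $g$ into a horizontal and a vertical piece so that $\sum_i |\Delta g(x_i,y_i)|$ telescopes to $g(b,d)-g(a,c)$, and necessity via the Arzel\'a variation function $V(x,y)$ with the decomposition $f = V - (V-f)$ driven by the appended-step inequality $V(x',y)-V(x,y) \ge |f(x',y)-f(x,y)|$) is precisely the classical Jordan-type decomposition used in that source, and all the points you flag as needing care (admissibility of the augmented staircase, boundedness of $V$ and hence of both components) go through exactly as you describe.
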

      \begin{remark}
      We shall refer to the  bounded function $f_i$ satisfying conditions in the previous theorem as a monotone function.

      \end{remark}

The two different notions of fractal dimension commonly used are  Hausdorff dimension and box dimension. We refer \cite{Fal} for these definitions and notation.

For a function $f :[a,b] \times [c,d] \rightarrow \mathbb{R}$, let us denote by $G(f)$ its graph defined as
$$G(f) = \big\{(\textbf{x},f(\textbf{x})): \textbf{x} \in [a,b] \times [c,d]  \big\}.$$

 \begin{lemma}\cite{Fal}
 Let $A \subseteq \mathbb{R}^n $ and $f :A \rightarrow \mathbb{R}$ be continuous on $A.$ Then we have $$\dim_H \big(G(f)\big) \ge \dim_H(A).$$
 In particular, if $f :[a,b] \times [c,d] \rightarrow \mathbb{R}$ is continuous, then $\dim_H \big(G(f)\big) \ge 2.$
 \end{lemma}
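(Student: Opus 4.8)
The plan is to exploit the fact that the graph $G(f)$ lies above $A$ and projects onto it via a distance-nonincreasing map, combined with the standard principle that Hausdorff dimension cannot be increased by a Lipschitz map. The continuity hypothesis will, in fact, play no role in this direction of the inequality; what matters is only that $f$ is single-valued.

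First I would introduce the orthogonal projection $\pi : \mathbb{R}^{n+1} \to \mathbb{R}^n$ defined by $\pi(x_1, \dots, x_n, x_{n+1}) = (x_1, \dots, x_n)$. Restricting $\pi$ to the graph $G(f) \subseteq \mathbb{R}^{n+1}$, I observe that $\pi(\mathbf{x}, f(\mathbf{x})) = \mathbf{x}$ for every $\mathbf{x} \in A$, so that $\pi\big(G(f)\big) = A$. Moreover, discarding one coordinate cannot increase Euclidean distance, so $\abs{\pi(u) - \pi(v)} \le \abs{u - v}$ for all $u, v \in \mathbb{R}^{n+1}$; that is, $\pi$ is Lipschitz with constant $1$.

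Next I would invoke the standard behaviour of Hausdorff measure under Lipschitz maps: if $g : E \to \mathbb{R}^m$ satisfies $\abs{g(u) - g(v)} \le c\,\abs{u-v}$, then $\mathcal{H}^s\big(g(E)\big) \le c^s\,\mathcal{H}^s(E)$ for every $s \ge 0$, and consequently $\dim_H\big(g(E)\big) \le \dim_H(E)$ (see \cite{Fal}). Applying this with $g = \pi|_{G(f)}$, $c = 1$, and $E = G(f)$ yields
$$\dim_H(A) = \dim_H\big(\pi(G(f))\big) \le \dim_H\big(G(f)\big),$$
which is precisely the asserted inequality.

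Finally, for the special case $A = [a,b] \times [c,d]$, I would simply recall that a nondegenerate rectangle in $\mathbb{R}^2$ has Hausdorff dimension $2$, so the general inequality gives $\dim_H\big(G(f)\big) \ge 2$. The single substantive ingredient is the Lipschitz-invariance of Hausdorff dimension; as this is entirely classical, I do not anticipate any genuine obstacle, and the main point is merely to identify the correct distance-nonincreasing map, namely the coordinate projection onto $A$.
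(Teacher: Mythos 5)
Your argument is correct and is precisely the standard one: the coordinate projection $\pi$ restricted to $G(f)$ is a Lipschitz (indeed $1$-Lipschitz) surjection onto $A$, and Hausdorff dimension does not increase under Lipschitz maps, so $\dim_H(A) \le \dim_H(G(f))$; the paper itself gives no proof, citing Falconer, whose argument is exactly this projection device. Your side remark that continuity is not needed for this inequality is also accurate.
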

\begin{definition}
For a function  $f: [a,b] \times [c,d] \rightarrow \mathbb{R}$, the maximum range of $f$ over the rectangle $A:=[a,b] \times [c,d]$ is defined by
 $$ R_f[A]= \sup_{(t,u),(x,y) \in A} |f(t,u)-f(x,y)|.$$
\end{definition}
The following is a bivariate analogue of \textup{\cite[Proposition 11.1]{Fal}}. Proof may be consulted in \cite{SVarx}.
\begin{lemma} \label{Falbiv}
 Let $f :[a,b] \times [c,d] \rightarrow \mathbb{R}$ be continuous. Suppose that $ 0 < \delta < \min\{b-a,d-c\},$ $ \frac{b-a}{\delta} \le m \le 1+ \frac{b-a}{\delta }$ and $ \frac{d-c}{\delta} \le n \le 1+ \frac{d-c}{\delta }$ for some $m ,n \in \mathbb{N}.$ If $N_{\delta}(G(f))$ is the number of $\delta$-cubes that intersect the graph $G(f)$ of the function $f,$ then $$  \sum_{j=1}^{n} \sum_{i=1}^{m} \max\Big\{ \frac{R_f[A_{ij}]}{ \delta},1\Big\}\le N_{\delta}(G(f)) \le 2mn + \frac{1}{\delta} \sum_{j=1}^{n} \sum_{i=1}^{m} R_f[A_{ij}],$$ where $A_{ij}$ denotes the $(i,j)$-th sub-rectangle obtained by the chosen partition of  $[a,b] \times [c,d]$.
 \end{lemma}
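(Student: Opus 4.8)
The plan is to adapt the standard box-counting argument for graphs of continuous functions, as in the univariate \cite[Proposition 11.1]{Fal}, to the surface $G(f)\subset\mathbb{R}^3$. First I would make the partition in the statement explicit. Since $\frac{b-a}{\delta}\le m$ and $\frac{d-c}{\delta}\le n$, the points $x_i=a+i\delta$ and $y_j=c+j\delta$ (capped at $b$ and $d$, and together with the conditions $m\le 1+\frac{b-a}{\delta}$, $n\le 1+\frac{d-c}{\delta}$ so that exactly $m$ and $n$ intervals are used) split $[a,b]\times[c,d]$ into $mn$ sub-rectangles $A_{ij}$, each of horizontal side lengths at most $\delta$. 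I would take the $\delta$-mesh of $\mathbb{R}^3$ to be aligned with $a$ and $c$ in the two horizontal directions, so that the mesh cubes organize into pairwise disjoint vertical columns, one column sitting exactly above each footprint $A_{ij}$. Counting $N_\delta(G(f))$ then reduces to counting, column by column, how many cubes of a given column meet the graph.

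The key observation is that for a fixed $A_{ij}$ a cube in the column above it meets $G(f)$ if and only if the corresponding $\delta$-mesh interval on the vertical axis meets the set $f(A_{ij})=\{f(x,y):(x,y)\in A_{ij}\}$. Because $f$ is continuous and $A_{ij}$ is compact and connected, $f(A_{ij})$ is a closed interval whose length is precisely $R_f[A_{ij}]=\max_{A_{ij}}f-\min_{A_{ij}}f$. Hence the number of cubes of the column meeting $G(f)$ equals the number of $\delta$-mesh intervals meeting an interval of length $R_f[A_{ij}]$. An elementary floor-function estimate shows that an interval of length $L$ meets at least $\max\{L/\delta,1\}$ and at most $L/\delta+2$ consecutive $\delta$-mesh intervals: the lower bound $1$ accounts for the graph being nonempty over each $A_{ij}$ even when $R_f[A_{ij}]=0$, and the additive $2$ absorbs the two possible boundary overshoots.

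It then remains to sum these per-column estimates. Since the columns above distinct sub-rectangles $A_{ij}$ are pairwise disjoint and their footprints cover $[a,b]\times[c,d]$, every cube meeting $G(f)$ lies in exactly one column, so $N_\delta(G(f))=\sum_{i,j}c_{ij}$, where $c_{ij}$ denotes the number of cubes of the $(i,j)$-th column meeting the graph. Substituting the two-sided per-column bound $\max\{R_f[A_{ij}]/\delta,1\}\le c_{ij}\le R_f[A_{ij}]/\delta+2$ and summing yields
$$\sum_{j=1}^{n}\sum_{i=1}^{m}\max\Big\{\frac{R_f[A_{ij}]}{\delta},1\Big\}\le N_\delta(G(f))\le\sum_{j=1}^{n}\sum_{i=1}^{m}\Big(\frac{R_f[A_{ij}]}{\delta}+2\Big)=2mn+\frac{1}{\delta}\sum_{j=1}^{n}\sum_{i=1}^{m}R_f[A_{ij}],$$
which is the asserted two-sided bound.

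I expect the only genuinely delicate points to be bookkeeping ones rather than conceptual ones: justifying that $f(A_{ij})$ is a full interval of length $R_f[A_{ij}]$ (continuity on a connected compact set), pinning down the additive $2$ in the per-column upper bound via the ceiling/floor counting, and making the mesh alignment explicit enough that the vertical columns have footprints exactly equal to the $A_{ij}$, so that no graph-meeting cube is double-counted and the per-column counts sum exactly to $N_\delta(G(f))$.
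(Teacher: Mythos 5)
Your argument is correct and is precisely the standard column-counting proof: it is the bivariate adaptation of \cite[Proposition 11.1]{Fal}, which is exactly what the lemma is announced to be (the paper itself omits the proof and refers to \cite{SVarx} for it). The per-column bound $\max\{R_f[A_{ij}]/\delta,1\}\le c_{ij}\le R_f[A_{ij}]/\delta+2$, justified via continuity of $f$ on the compact connected set $A_{ij}$ so that $f(A_{ij})$ is an interval of length $R_f[A_{ij}]$, together with the disjointness of the columns, gives both inequalities as you state.
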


 Now the following corollary can be deduced from the above lemma on lines similar to \textup{\cite[Corollary 11.2]{Fal}}; see also \cite{SVarx} .
 \begin{corollary}\label{BVK_holder_dim}
 Let $f:[a,b] \times [c,d] \rightarrow \mathbb{R}$ be a continuous function.

 \begin{enumerate}
 \item Suppose $$\big|f(t,u)-f(x,y)\big| \le c ~~\|(t,u)-(x,y)\|_2^s ~~~~ \forall~~ (t,u),(x, y) \in [a,b] \times [c,d],$$ where $c>0$ and $0 \le s \le 1.$ Then $$\dim_H (G_f)\le  \underline{\dim}_B (G_f)   \le \overline{\dim}_B (G_f) \le 3-s.$$ The conclusion remains true if H\"older condition holds when $\|(t,u)-(x,y)\|_2 < \delta $ for some $\delta >0.$
 \item Suppose that there are numbers $c>0$, $\delta_0 >0$ and $0 \le s \le 1$ with the following property: for each $(x,y) \in [a,b] \times [c,d] $ and $ 0< \delta <\delta_0$ there exists $(t,u)$ such that $\|(t,u)-(x,y)\|_2 \le \delta$ and $$ |f(t,u)-f(x,y)| \ge c \delta^s.$$ Then $ \underline{\dim}_B (G_f) \ge 3 - s.$
\end{enumerate}
 \end{corollary}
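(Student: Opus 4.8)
The plan is to derive both parts directly from the two-sided estimate for $N_\delta(G(f))$ furnished by Lemma \ref{Falbiv}, following the pattern of the univariate argument in \cite[Corollary 11.2]{Fal}. Throughout, I would work with the mesh of $\delta$-squares from that lemma, for which $mn$ is comparable to $\delta^{-2}$; more precisely $\frac{(b-a)(d-c)}{\delta^2} \le mn \le \left(1+\frac{b-a}{\delta}\right)\left(1+\frac{d-c}{\delta}\right)$, so that $mn \le C\,\delta^{-2}$ for all sufficiently small $\delta$ and some constant $C$ depending only on the rectangle.

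For part (1), the key observation is that the H\"older estimate controls the maximum range on each sub-rectangle: since any two points of $A_{ij}$ are within Euclidean distance $\sqrt{2}\,\delta$, the hypothesis gives $R_f[A_{ij}] \le c\,(\sqrt{2}\,\delta)^s = c\,2^{s/2}\,\delta^s$. Substituting this into the upper estimate of Lemma \ref{Falbiv}, I would obtain
$$N_\delta(G(f)) \le 2mn + \frac{1}{\delta}\,mn\,c\,2^{s/2}\,\delta^s \le C'\,\delta^{-2} + C''\,\delta^{s-3}.$$
Because $0 \le s \le 1$ forces $s-3 \le -2$, the second term dominates (or matches) the first as $\delta \to 0$, whence $N_\delta(G(f)) \le C'''\,\delta^{s-3}$ for small $\delta$. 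Feeding this into $\overline{\dim}_B(G_f) = \limsup_{\delta\to 0}\frac{\log N_\delta(G(f))}{-\log\delta}$ yields $\overline{\dim}_B(G_f) \le 3-s$, and the chain $\dim_H \le \underline{\dim}_B \le \overline{\dim}_B$ completes the claim. The local version is immediate, since only arbitrarily small $\delta$ enter the $\limsup$, and for such $\delta$ the diameter $\sqrt{2}\,\delta$ eventually falls below the threshold, so the bound $R_f[A_{ij}] \le c\,(\sqrt{2}\,\delta)^s$ persists.

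For part (2) I would exploit the lower estimate of Lemma \ref{Falbiv}, namely $N_\delta(G(f)) \ge \frac{1}{\delta}\sum_{i,j} R_f[A_{ij}]$ (using $\max\{x,1\}\ge x$). For each sub-rectangle $A_{ij}$, pick a point $(x,y)\in A_{ij}$ and apply the hypothesis with this $\delta$ to produce $(t,u)$ with $\|(t,u)-(x,y)\|_2 \le \delta$ and $|f(t,u)-f(x,y)| \ge c\,\delta^s$. Since $(t,u)$ lies in the ball of radius $\delta$ about $(x,y)$, it belongs to $A_{ij}$ or to one of its finitely many (at most eight) neighbouring sub-rectangles; let $B_{ij}$ denote the union of $A_{ij}$ with these neighbours, a connected union of at most nine $\delta$-squares. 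A telescoping argument along a path inside $B_{ij}$ gives the subadditivity $R_f[B_{ij}] \le \sum_{A_{kl}\subseteq B_{ij}} R_f[A_{kl}]$, and since both points lie in $B_{ij}$ we have $R_f[B_{ij}] \ge c\,\delta^s$, whence $\sum_{A_{kl}\subseteq B_{ij}} R_f[A_{kl}] \ge c\,\delta^s$. Summing over all $(i,j)$ and noting that each sub-rectangle occurs in at most nine of the blocks $B_{ij}$, I would obtain $9\sum_{k,l} R_f[A_{kl}] \ge mn\,c\,\delta^s$, so $\sum_{k,l} R_f[A_{kl}] \ge \tfrac{c}{9}\,mn\,\delta^s \ge c_1\,\delta^{s-2}$. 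Consequently $N_\delta(G(f)) \ge \frac{1}{\delta}\sum_{k,l}R_f[A_{kl}] \ge c_1\,\delta^{s-3}$, and passing to $\underline{\dim}_B(G_f) = \liminf_{\delta\to 0}\frac{\log N_\delta(G(f))}{-\log\delta}$ gives $\underline{\dim}_B(G_f)\ge 3-s$.

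The routine part is part (1); the one step that needs genuine care is the cross-rectangle bookkeeping in part (2). Unlike the univariate situation, the nearby point $(t,u)$ guaranteed by the hypothesis may fall into a diagonal neighbour of $A_{ij}$, so the pairing used in \cite{Fal} must be replaced by a block of up to nine squares. I expect the main obstacle to be making precise (i) the subadditivity $R_f[B_{ij}]\le \sum R_f[A_{kl}]$ for a connected two-dimensional block, which rests on joining the extremising points of $f$ by a path that crosses shared edges so that the total oscillation telescopes, and (ii) the uniform bound on the multiplicity with which each square is reused across the blocks $B_{ij}$, which is what keeps the constant independent of $\delta$.
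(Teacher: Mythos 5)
Your proposal is correct and is essentially the argument the paper intends: the paper gives no proof of this corollary, stating only that it "can be deduced from the above lemma on lines similar to \cite[Corollary 11.2]{Fal}," which is precisely your route of feeding the H\"older bound $R_f[A_{ij}]\le c\,2^{s/2}\delta^s$ into the upper estimate of Lemma \ref{Falbiv} for part (1) and the reverse inequality into the lower estimate for part (2). The only nitpick is in part (2): since the mesh width $(b-a)/m$ may be slightly smaller than $\delta$, the point $(t,u)$ can land up to two squares away from $A_{ij}$, so the block $B_{ij}$ should be taken as (say) a $5\times 5$ neighbourhood and the multiplicity constant $9$ replaced by $25$ --- a harmless change that does not affect the exponent $3-s$.
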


\subsection{Fractional integral}
 Two most common forms of fractional integrals that studied for their applications are Riemann-Liouville fractional integral and the Hadamard fractional integral. We shall recall a version of fractional integral introduced by Katugampola \cite{Katu},
which includes these two well known fractional integrals.
\begin{definition}
 The Katugampola fractional integral of $f: [a,b] \to \mathbb{R}$ is defined as $$ \big(^{\rho}_a \mathfrak{I}^{\alpha}f\big)(x)=\frac{(\rho+1)^{1-\alpha}}{\Gamma (\alpha)} \int_a ^x (x^{\rho+1}-t^{\rho+1})^{\alpha-1} t^{\rho}f(t)~\mathrm{d}t,$$ where $\alpha>0$ and $\rho \ne -1$ are real numbers.
\end{definition}
Analogous to the definition of Riemann-Liouville fractional integral for a univariate function, the mixed Riemann-Liouville fractional integral of multivariate function is defined naturally as follows, see, for instance, \cite{Samko}.
\begin{definition}
Let $a=(a_1,a_2,\dots,a_n)$ be a fixed point in $\mathbb{R}^n$ and $\phi(x)= \phi(x_1,x_2,\dots,x_n)$ be a function of $n$ variables given for $x_k>a_k$. The left-hand
sided mixed Riemann-Liouville fractional integral of order $\alpha= (\alpha_1,\alpha_2, \dots, \alpha_n)$ is defined as
$$ \big({\mathfrak{R}}_{a}^{\alpha}\phi \big)(x) = \frac{1}{\Gamma (\alpha)}\int_{a_1}^{x_1} \dots \int_{a_n} ^{x_n} \frac{\phi(t)}{(x-t)^{1-\alpha}}~ \mathrm{d}t, \quad \alpha>0,$$
where $\Gamma (\alpha) = \Gamma (\alpha_1) \Gamma (\alpha_2) \dots \Gamma(\alpha_n),$ $\mathrm{d}t=\mathrm{d}t_1\dots \mathrm{d}t_n$ and $(x-t)^{\alpha-1}=(x_1-t_1)^{\alpha_1-1} \dots (x_n-t_n)^{\alpha_n-1}$.
In particular, for a function $f$ defined  on a closed rectangle $[a,b] \times [c,d],$ the left-hand sided mixed Riemann-Liouville fractional integral of $f$ is defined as $$ \big({\mathfrak{R}}_{(a,c)}^{(\alpha_1,\alpha_2)}f \big)(x,y)=\frac{1}{\Gamma (\alpha_1)  \Gamma (\alpha_2)} \int_a ^x \int_c ^y (x-t_1)^{\alpha_1-1} (y-t_2)^{\alpha_2-1}f(t_1,t_2)~\mathrm{d}t_1 ~ \mathrm{d}t_2 ,$$ where $\alpha_1 >0 , \alpha_2 >0.$
Similarly, the right-hand sided mixed fractional integral can be defined.
\end{definition}

%-------------------------------------------------------------------------------------------------------------------------------------------

\section{The Mixed Katugampola fractional integral and some basic properties}
Let  $ 0 < a <b <\infty$ and $ 0 < c < d <\infty.$ On lines similar to the mixed Riemann-Liouville fractional integral, in what follows, we define the notion of mixed Katugampola fractional integral of a bivariate function.
\begin{definition}
    Let $f$ be a function on a closed rectangle $[a,b] \times [c,d].$ The mixed (left-hand sided) Katugampola fractional integral of $f$ is defined as
    \begin{equation*}
    \begin{split}
     \big(^{(p,q)}_{(a,c)} \mathfrak{I}^{(\alpha, \beta)}f\big)(x,y)= &~\frac{(p+1)^{1-\alpha}(q+1)^{1-\beta}}{\Gamma (\alpha) \Gamma (\beta) }.\\ &~ \int_a ^x \int_c ^y (x^{p+1}-s^{p+1})^{\alpha-1}  (y^{q+1}-t^{q+1})^{\beta-1} s^{p} t^{q}f(s,t)~\mathrm{d}s ~ \mathrm{d}t,
     \end{split}
     \end{equation*}
     where $\alpha, \beta > 0$ and $(p,q) \ne (-1,-1).$
    \end{definition}
\begin{remark}
When $(p,q) =(0,0),$ one reobtains the mixed Riemann-Liouville fractional integral, as expected. Moreover, using the L'hospital rule, when $p,q \rightarrow -1^+,$ we have,
   \begin{equation*}
       \begin{aligned}
        &\lim_{p,q \rightarrow -1^+} ~~ \big(^{(p,q)}_{(a,c)} \mathfrak{I}^{(\alpha, \beta)}f\big)(x,y)\\=&\lim_{p,q \rightarrow -1^+} \frac{(p+1)^{1-\alpha}(q+1)^{1-\beta}}{\Gamma (\alpha)\Gamma (\beta)} \int_a ^x \int_c ^y (x^{p+1}-s^{p+1})^{\alpha-1}  (y^{q+1}-t^{q+1})^{\beta-1} s^{p} t^{q}f(s,t)~\mathrm{d}s ~ \mathrm{d}t\\
        =& ~\frac{1}{\Gamma (\alpha) \Gamma (\beta)} \int_a ^x \int_c^y \lim_{p \rightarrow -1^+} \Big(\frac{x^{p+1}-s^{p+1}}{\rho+1}\Big)^{\alpha-1} s^{p} \lim_{q \rightarrow -1^+} \Big(\frac{y^{q+1}-t^{q+1}}{q+1}\Big)^{\beta-1} t^{q}f(s,t)~\mathrm{d}s ~ \mathrm{d}t\\
        =& ~\frac{1}{\Gamma (\alpha) \Gamma (\beta)} \int_a ^x \int_c^y (\log \frac{x}{s})^{\alpha-1} (\log \frac{y}{t})^{\beta-1} \frac{f(s,t)}{st}~\mathrm{d}s ~ \mathrm{d}t,
        \end{aligned}
                  \end{equation*}
   which coincides with the mixed Hadamard fractional integral.
   \end{remark}

\begin{theorem}

If $f$ is bounded function on $[a,b] \times [c,d] $, $\alpha >0$, $\beta >0$ and $p,q >-1$, then $^{(p,q)}_{(a,c)} \mathfrak{I}^{(\alpha, \beta)}f$
 is bounded.
\end{theorem}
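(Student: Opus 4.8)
The plan is to bound the integral directly by replacing $|f|$ with its supremum and then evaluating the resulting kernel integral in closed form. Set $M = \sup_{(s,t)\in[a,b]\times[c,d]} |f(s,t)|$, which is finite by hypothesis. The hypotheses $p,q>-1$ ensure that on the domain of integration the kernel $(x^{p+1}-s^{p+1})^{\alpha-1}(y^{q+1}-t^{q+1})^{\beta-1}s^p t^q$ is nonnegative (indeed $x^{p+1}-s^{p+1}\ge 0$ for $a\le s\le x$ since $p+1>0$, and $s^p\ge 0$), so the triangle inequality for integrals gives
$$\big|\big(^{(p,q)}_{(a,c)}\mathfrak{I}^{(\alpha,\beta)}f\big)(x,y)\big| \le \frac{(p+1)^{1-\alpha}(q+1)^{1-\beta}}{\Gamma(\alpha)\Gamma(\beta)}\, M \int_a^x\int_c^y (x^{p+1}-s^{p+1})^{\alpha-1}(y^{q+1}-t^{q+1})^{\beta-1}s^p t^q\,\mathrm{d}s\,\mathrm{d}t.$$
Because the integrand separates into a factor depending only on $s$ and one depending only on $t$, the double integral factors as a product of two single integrals, and it suffices to estimate each factor.

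Next I would evaluate the $s$-integral by the substitution $u=s^{p+1}$, so that $\mathrm{d}u=(p+1)s^p\,\mathrm{d}s$ and the factor $s^p$ is absorbed, followed by the shift $v=x^{p+1}-u$, which reduces the integral to $\tfrac{1}{p+1}\int_0^{x^{p+1}-a^{p+1}} v^{\alpha-1}\,\mathrm{d}v$. This is precisely the step where $\alpha>0$ is essential: although $v^{\alpha-1}$ is unbounded near $v=0$ when $\alpha<1$, the singularity is integrable exactly because $\alpha>0$, and the integral equals $\tfrac{1}{(p+1)\alpha}(x^{p+1}-a^{p+1})^\alpha$. The identical computation for the $t$-integral produces $\tfrac{1}{(q+1)\beta}(y^{q+1}-c^{q+1})^\beta$.

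Finally I would make the estimate uniform in $(x,y)$. Since $p+1>0$, the map $x\mapsto x^{p+1}-a^{p+1}$ is increasing on $[a,b]$, whence $(x^{p+1}-a^{p+1})^\alpha\le (b^{p+1}-a^{p+1})^\alpha$, and likewise $(y^{q+1}-c^{q+1})^\beta\le (d^{q+1}-c^{q+1})^\beta$. Feeding these into the factored bound and simplifying with $\alpha\Gamma(\alpha)=\Gamma(\alpha+1)$ and $\beta\Gamma(\beta)=\Gamma(\beta+1)$ yields
$$\big|\big(^{(p,q)}_{(a,c)}\mathfrak{I}^{(\alpha,\beta)}f\big)(x,y)\big| \le \frac{M}{\Gamma(\alpha+1)\Gamma(\beta+1)}\Big(\frac{b^{p+1}-a^{p+1}}{p+1}\Big)^{\alpha}\Big(\frac{d^{q+1}-c^{q+1}}{q+1}\Big)^{\beta},$$
a constant independent of $(x,y)$, establishing boundedness. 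The only genuinely delicate point in the argument is the treatment of the integrable kernel singularity when $\alpha<1$ or $\beta<1$; once the change of variables recasts it as $v^{\alpha-1}$ with $\alpha>0$, the remainder is a routine evaluation.
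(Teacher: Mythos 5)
Your proof is correct and follows essentially the same route as the paper: replace $|f|$ by its supremum $M$, substitute $u=s^{p+1}$, $v=t^{q+1}$ to reduce the kernel integral to a beta-type integral evaluating to $\tfrac{1}{(p+1)\alpha}(x^{p+1}-a^{p+1})^{\alpha}$ times the analogous $t$-factor, and bound uniformly over $[a,b]\times[c,d]$, arriving at the same constant $\frac{M(p+1)^{-\alpha}(q+1)^{-\beta}}{\Gamma(\alpha+1)\Gamma(\beta+1)}(b^{p+1}-a^{p+1})^{\alpha}(d^{q+1}-c^{q+1})^{\beta}$. The only cosmetic difference is that you make the separability of the double integral and the integrability of the singularity at $v=0$ explicit, which the paper leaves implicit.
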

\begin{proof}
Since $f$ is bounded, there exists $M>0$ such that $$|f(x,y)|\le M ~~\forall~~ (x,y) \in [a,b] \times [c,d].$$ Now,
\begin{equation*}
    \begin{aligned}
     &~\Big| \big(^{(p,q)}_{(a,c)} \mathfrak{I}^{(\alpha, \beta)}f \big)(x,y)     \Big|\\ ~= &\Bigg|\frac{(p+1)^{1-\alpha}(q+1)^{1-\beta}}{\Gamma (\alpha) \Gamma (\beta) }~\int_a ^x \int_c ^y (x^{p+1}-s^{p+1})^{\alpha-1}  (y^{q+1}-t^{q+1})^{\beta-1} s^{p} t^{q}f(s,t)~\mathrm{d}s ~ \mathrm{d}t\Bigg|\\
     \le & ~\frac{(p+1)^{1-\alpha}(q+1)^{1-\beta}}{\Gamma (\alpha) \Gamma (\beta) } \int_a ^x \int_c ^y \Big|(x^{p+1}-s^{p+1})^{\alpha-1}  (y^{q+1}-t^{q+1})^{\beta-1} s^{p} t^{q} \Big| \Big|f(s,t)\Big| ~\mathrm{d}s ~ \mathrm{d}t\\
     \le&~ \frac{M(p+1)^{1-\alpha}(q+1)^{1-\beta}}{\Gamma (\alpha) \Gamma (\beta) } \int_a ^x \int_c ^y (x^{p+1}-s^{p+1})^{\alpha-1}  (y^{q+1}-t^{q+1})^{\beta-1} s^{p} t^{q} ~\mathrm{d}s ~ \mathrm{d}t.
     \end{aligned}
               \end{equation*}
For $ 0<a<b<\infty $ and $  p,q >-1$, we have positive values inside the modulus in right side of the above inequality. Applying the transformation  $ s^{p+1}=u,t^{q+1}=v,$ in the double integral in the right hand side,  we get
\begin{equation*}
    \begin{aligned}
   &~ \Big| \big(^{(p,q)}_{(a,c)} \mathfrak{I}^{(\alpha, \beta)}f \big)(x,y)     \Big|\\ \le &~  \frac{M(p+1)^{-\alpha}(q+1)^{-\beta}}{\Gamma (\alpha) \Gamma (\beta)} \int_{a^{p+1}} ^{x^{p+1}}  \int_{c^{q+1}} ^{y^{q+1}} (x^{p+1}-u)^{\alpha-1}(y^{q+1}-v)^{\beta-1} ~\mathrm{d}u ~ \mathrm{d}v\\
      \le &~  \frac{M(p+1)^{-\alpha}(q+1)^{-\beta}}{\Gamma (\alpha+1) \Gamma (\beta +1) } (b^{p+1}-a^{p+1})^{\alpha} (d^{q+1}-c^{q+1})^{\beta},
     \end{aligned}
               \end{equation*}
  which reveals that $^{(p,q)}_{(a,c)} \mathfrak{I}^{(\alpha, \beta)}f$ is bounded.
\end{proof}

\begin{theorem} \label{BVK_Conti}
Let   $ p,q >-1$ and $\alpha, \beta >0.$ The mixed Katugampola fractional integral  $^{(p,q)}_{(a,c)} \mathfrak{I}^{(\alpha, \beta)}f$ of a 
 continuous function  $f: [a,b] \times [c,d] \to \mathbb{R}$,   is continuousl.
\end{theorem}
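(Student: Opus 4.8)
The plan is to dispose of the two sources of difficulty in the defining integral at once---the dependence of the upper limits of integration on $(x,y)$ and the weak singularity of the kernel at the upper endpoints---by transporting the integral onto the fixed unit square $[0,1]^2$, and then to conclude by the dominated convergence theorem. Concretely, writing $F := {}^{(p,q)}_{(a,c)} \mathfrak{I}^{(\alpha,\beta)}f$ and performing the substitution $s^{p+1} = a^{p+1} + (x^{p+1}-a^{p+1})\tau$ and $t^{q+1} = c^{q+1} + (y^{q+1}-c^{q+1})\eta$ (legitimate since $p,q>-1$ force $p+1,q+1>0$ and $a,c>0$), one computes $s^p\,\mathrm{d}s = \tfrac{x^{p+1}-a^{p+1}}{p+1}\,\mathrm{d}\tau$, $x^{p+1}-s^{p+1} = (x^{p+1}-a^{p+1})(1-\tau)$, and the analogous identities in $t$. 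This yields
\[
F(x,y) = \frac{(p+1)^{-\alpha}(q+1)^{-\beta}}{\Gamma(\alpha)\Gamma(\beta)}\,(x^{p+1}-a^{p+1})^{\alpha}(y^{q+1}-c^{q+1})^{\beta} \int_0^1 \! \int_0^1 (1-\tau)^{\alpha-1}(1-\eta)^{\beta-1} f\big(s(\tau,x),t(\eta,y)\big)\,\mathrm{d}\tau\,\mathrm{d}\eta,
\]
where $s(\tau,x) = \big(a^{p+1}+(x^{p+1}-a^{p+1})\tau\big)^{1/(p+1)}$ and $t(\eta,y) = \big(c^{q+1}+(y^{q+1}-c^{q+1})\eta\big)^{1/(q+1)}$.

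Now continuity of $F$ follows by inspecting the right-hand side. The prefactor $(x^{p+1}-a^{p+1})^{\alpha}(y^{q+1}-c^{q+1})^{\beta}$ is plainly continuous in $(x,y)$, so it suffices to prove that the double integral over $[0,1]^2$ is continuous. For this I would fix an arbitrary sequence $(x_n,y_n)\to(x_0,y_0)$ in $[a,b]\times[c,d]$. The maps $x\mapsto s(\tau,x)$ and $y\mapsto t(\eta,y)$ are continuous (compositions of the continuous maps $x\mapsto x^{p+1}$ and $u\mapsto u^{1/(p+1)}$, the latter applied to arguments bounded below by $a^{p+1}>0$), and the values $s(\tau,x)\in[a,b]$, $t(\eta,y)\in[c,d]$ stay in the domain of $f$; hence, by continuity of $f$, the integrand converges pointwise for each $(\tau,\eta)\in[0,1)^2$ as $n\to\infty$. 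Moreover, with $M := \sup_{[a,b]\times[c,d]}|f| < \infty$ (finite since $f$ is continuous on a compact set), the integrand is dominated in absolute value by $M\,(1-\tau)^{\alpha-1}(1-\eta)^{\beta-1}$, which is independent of $n$ and integrable on $[0,1]^2$ because $\int_0^1 (1-\tau)^{\alpha-1}\,\mathrm{d}\tau = 1/\alpha$ and $\int_0^1 (1-\eta)^{\beta-1}\,\mathrm{d}\eta = 1/\beta$ are finite for $\alpha,\beta>0$. The dominated convergence theorem then gives convergence of the integrals, establishing continuity of the double integral and hence of $F$.

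The only genuine obstacle is the weak singularity $(x^{p+1}-s^{p+1})^{\alpha-1}$ (and its counterpart in $t$) when $\alpha<1$ or $\beta<1$, which also entangles with the fact that the integration region itself varies with $(x,y)$. The change of variables above is precisely what resolves this: it simultaneously fixes the domain to $[0,1]^2$ and converts the singular factor into the fixed, $(x,y)$-independent integrable weight $(1-\tau)^{\alpha-1}(1-\eta)^{\beta-1}$, after which the dependence on $(x,y)$ lives only in the continuous prefactor and in the argument of $f$, where it is harmless. A direct $\varepsilon$--$\delta$ estimate comparing $F(x_2,y_2)$ with $F(x_1,y_1)$ without this normalization is possible but would require splitting off a shrinking neighbourhood of the moving singularity and bounding the resulting pieces separately, which is considerably more cumbersome; I would therefore prefer the substitution route.
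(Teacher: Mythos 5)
Your proof is correct, and it takes a genuinely different route from the paper. The paper argues directly on the difference $F(x+h,y+k)-F(x,y)$: it splits the integral over $[a,x+h]\times[c,y+k]$ into several pieces, applies the substitutions $(x+h)^{p+1}-s^{p+1}=x^{p+1}-u^{p+1}$ and $(y+k)^{q+1}-t^{q+1}=y^{q+1}-v^{q+1}$ to align the singular kernels of the two integrals over the common region $[a,x]\times[c,y]$, and then bounds each resulting term using the boundedness and uniform continuity of $f$ together with explicit integration of the kernel over the thin leftover strips. Your normalization $s^{p+1}=a^{p+1}+(x^{p+1}-a^{p+1})\tau$, $t^{q+1}=c^{q+1}+(y^{q+1}-c^{q+1})\eta$ accomplishes the kernel alignment once and for all: it freezes the domain at $[0,1]^2$, turns the moving singularity into the fixed integrable weight $(1-\tau)^{\alpha-1}(1-\eta)^{\beta-1}$, and reduces everything to dominated convergence with the dominating function $M(1-\tau)^{\alpha-1}(1-\eta)^{\beta-1}$. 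The computation of the Jacobian and of the prefactor $(x^{p+1}-a^{p+1})^{\alpha}(y^{q+1}-c^{q+1})^{\beta}$ checks out, the argument of $f$ stays in $[a,b]\times[c,d]$, and pointwise convergence on $[0,1)^2$ is enough for the dominated convergence theorem. What each approach buys: the paper's hands-on estimate is the template that is later sharpened into the H\"older bound of Theorem \ref{Holder_bivariate} (where explicit rates in $h,k$ are needed), whereas your argument is shorter, avoids the six-term bookkeeping, and handles arbitrary approach directions at once --- the paper's proof, as written, only treats increments with $h,k\ge 0$ and leaves the remaining sign combinations to symmetry. The price is that dominated convergence yields continuity without an explicit modulus, but on a compact rectangle that costs nothing for the statement at hand.
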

\begin{proof}
Let $ 0<a\le x <x+h\le b $ and $ 0<c \le y < y+k \le d.$
\small
\begin{equation*}
    \begin{aligned}
    & \big(^{(p,q)}_{(a,c)} \mathfrak{I}^{(\alpha, \beta)}f\big)(x+h,y+k)-\big(^{(p,q)}_{(a,c)} \mathfrak{I}^{(\alpha, \beta)}f\big) (x,y)\\=& \frac{(p+1)^{1-\alpha}(q+1)^{1-\beta}}{\Gamma (\alpha) \Gamma (\beta) } \Bigg[\int_a ^{x+h} \int_c ^{y+k} ((x+h)^{p+1}-s^{p+1})^{\alpha-1}  ((y+k)^{q+1}-t^{q+1})^{\beta-1} \\ &.s^{p} t^{q}f(s,t)~\mathrm{d}s ~ \mathrm{d}t- \int_a ^x \int_c ^y (x^{p+1}-s^{p+1})^{\alpha-1}  (y^{q+1}-t^{q+1})^{\beta-1} s^{p} t^{q}f(s,t)~\mathrm{d}s ~ \mathrm{d}t \Bigg]\\
      =& \frac{(p+1)^{1-\alpha}(q+1)^{1-\beta}}{\Gamma (\alpha) \Gamma (\beta) } \Bigg[\int_a ^{a+h} \int_c ^{c+k}((x+h)^{p+1}-s^{p+1})^{\alpha-1}  ((y+k)^{q+1}-t^{q+1})^{\beta-1} \\ &.s^{p} t^{q}f(s,t)~\mathrm{d}s ~ \mathrm{d}t+ \int_{a+h} ^{x+h} \int_c ^{c+k}((x+h)^{p+1}-s^{p+1})^{\alpha-1}  ((y+k)^{q+1}-t^{q+1})^{\beta-1} \\ &.s^{p} t^{q}f(s,t)~\mathrm{d}s ~ \mathrm{d}t+  \int_a ^{a+h} \int_{c+k} ^{y+k}((x+h)^{p+1}-s^{p+1})^{\alpha-1}  ((y+k)^{q+1}-t^{q+1})^{\beta-1}\\ &. s^{p} t^{q}f(s,t)~\mathrm{d}s ~ \mathrm{d}t + \int_{a+h} ^{x+h} \int_{c+k} ^{y+k} ((x+h)^{p+1}-s^{p+1})^{\alpha-1}  ((y+k)^{q+1}-t^{q+1})^{\beta-1} \\ &.s^{p} t^{q}f(s,t)~\mathrm{d}s ~ \mathrm{d}t
      - \int_a ^x \int_c ^y (x^{p+1}-s^{p+1})^{\alpha-1}  (y^{q+1}-t^{q+1})^{\beta-1} s^{p} t^{q}f(s,t)~\mathrm{d}s ~ \mathrm{d}t \Bigg]\\
 := & \frac{(p+1)^{1-\alpha}(q+1)^{1-\beta}}{\Gamma (\alpha) \Gamma (\beta) }\big[I_1 + I_2+I_3+I_4 - I_5\big].
           \end{aligned}
               \end{equation*}
    \normalsize
 Let us consider the integral:

 $$I_4:=   \int_{a+h} ^{x+h} \int_{c+k} ^{y+k} ((x+h)^{p+1}-s^{p+1})^{\alpha-1}  ((y+k)^{q+1}-t^{q+1})^{\beta-1} s^{p} t^{q}f(s,t)~\mathrm{d}s ~ \mathrm{d}t.$$

 Consider the transformation  $(x+h)^{p+1}-s^{p+1}=x^{p+1}-u^{p+1}$. Noting that  $a_*:=\sqrt[p+1]{(x^{p+1}-(x+h)^{p+1}+(a+h)^{p+1})} \le a,$ we have
  \small
  \begin{equation*}
      \begin{aligned}
       I_4 =&~  \int_{a_*} ^{x} \int_{c+k} ^{y+k} (x^{p+1}-u^{p+1})^{\alpha-1}  ((y+k)^{q+1}-t^{q+1})^{\beta-1} u^{p} t^{q} \\& .f(\sqrt[p+1]{(u^{p+1}+(x+h)^{p+1}-x^{p+1})},t)~\mathrm{d}u ~ \mathrm{d}t\\
       :=&~ I_{4,1} + I_{4,2},
  \end{aligned}
                 \end{equation*}
                    \normalsize
where
 \small
\begin{equation*}
      \begin{aligned}
       I_{4,1} =&~  \int_{a_*} ^{a} \int_{c+k} ^{y+k} (x^{p+1}-u^{p+1})^{\alpha-1}  ((y+k)^{q+1}-t^{q+1})^{\beta-1} u^{p} t^{q} \\&.f(\sqrt[p+1]{(u^{p+1}+(x+h)^{p+1}-x^{p+1})},t)~\mathrm{d}u ~ \mathrm{d}t\\
      I_{4,2} = &~  \int_{a} ^{x} \int_{c+k} ^{y+k} (x^{p+1}-u^{p+1})^{\alpha-1}  ((y+k)^{q+1}-t^{q+1})^{\beta-1} u^{p} t^{q} \\ &.f(\sqrt[p+1]{(u^{p+1}+(x+h)^{p+1}-x^{p+1})},t)~\mathrm{d}u ~ \mathrm{d}t\
  \end{aligned}
                 \end{equation*}

 \normalsize
Turning to the integral $I_{4,2}$, we see from the substitution $(y+k)^{q+1}-t^{q+1}=y^{q+1}-v^{q+1}$ that

$$I_{4,2} = I_{4,2}^1 + I_{4,2}^2,$$
 where
 \begin{equation*}
 \begin{split}
 I_{4,2}^1 =&~  \int_{a} ^{x} \int_{\sqrt[q+1]{(y^{q+1}-(y+k)^{q+1}+(c+k)^{q+1})}} ^{c} (x^{p+1}-u^{p+1})^{\alpha-1}  (y^{q+1}-v^{q+1})^{\beta-1} u^{p} v^{q}\\ & .f\Big(\sqrt[p+1]{(u^{p+1}+(x+h)^{p+1}-x^{p+1})},\sqrt[q+1]{(v^{q+1}+(y+k)^{q+1}-y^{q+1})}\Big)~\mathrm{d}u ~ \mathrm{d}v\\
  I_{4,2}^2=&~  \int_a ^x \int_c ^y (x^{p+1}-u^{p+1})^{\alpha-1}  (y^{q+1}-v^{q+1})^{\beta-1} u^{p} v^{q}\\ & .\Big[f\Big(\sqrt[p+1]{(u^{p+1}+(x+h)^{p+1}-x^{p+1})},\sqrt[q+1]{(v^{q+1}+(y+k)^{q+1}-y^{q+1})}\Big) \mathrm{d}u ~ \mathrm{d}v
 \end{split}
 \end{equation*}
 Combining these we obtain
\begin{equation*}
       \begin{aligned}
        &\big(^{(p,q)}_{(a,c)} \mathfrak{I}^{(\alpha, \beta)}f\big)(x+h,y+k)-\big(^{(p,q)}_{(a,c)} \mathfrak{I}^{(\alpha, \beta)}f\big) (x,y)\\
        &= \frac{(p+1)^{1-\alpha}(q+1)^{1-\beta}}{\Gamma (\alpha) \Gamma (\beta)  } \Bigg[\int_a ^{a+h} \int_c ^{c+k}((x+h)^{p+1}-s^{p+1})^{\alpha-1}  ((y+k)^{q+1}-t^{q+1})^{\beta-1} \\ & .s^{p} t^{q}f(s,t)~\mathrm{d}s ~ \mathrm{d}t+         \int_{a+h} ^{x+h} \int_c ^{c+k}((x+h)^{p+1}-s^{p+1})^{\alpha-1}  ((y+k)^{q+1}-t^{q+1})^{\beta-1} \\ & .s^{p} t^{q}f(s,t)~\mathrm{d}s ~ \mathrm{d}t +         \int_a ^{a+h} \int_{c+k} ^{y+k}((x+h)^{p+1}-s^{p+1})^{\alpha-1}  ((y+k)^{q+1}-t^{q+1})^{\beta-1}\\ & . s^{p} t^{q}f(s,t)~\mathrm{d}s ~ \mathrm{d}t+        \int_{ \sqrt[p+1]{x^{p+1}-(x+h)^{p+1}+(a+h)^{p+1}}} ^{a} \int_{c+k} ^{y+k} (x^{p+1}-s^{p+1})^{\alpha-1}  \\&.((y+k)^{q+1}-t^{q+1})^{\beta-1} s^{p} t^{q} f(\sqrt[p+1]{s^{p+1}+(x+h)^{p+1}-x^{p+1}},t)~\mathrm{d}s ~ \mathrm{d}t\\
         &+        \int_{a} ^{x} \int_{\sqrt[q+1]{y^{q+1}-(y+k)^{q+1}+(c+k)^{q+1}}} ^{c} (x^{p+1}-s^{p+1})^{\alpha-1}  (y^{q+1}-t^{q+1})^{\beta-1} s^{p} t^{q}\\ &. f\Big(\sqrt[p+1]{s^{p+1}+(x+h)^{p+1}-x^{p+1}},\sqrt[q+1]{t^{q+1}+(y+k)^{q+1}-y^{q+1}}\Big)~\mathrm{d}s ~ \mathrm{d}t\\
               &+
                     \int_a ^x \int_c ^y (x^{p+1}-s^{p+1})^{\alpha-1}  (y^{q+1}-t^{q+1})^{\beta-1} s^{p} t^{q}\\ & .\Big[f\Big(\sqrt[p+1]{s^{p+1}+(x+h)^{p+1}-x^{p+1}},\sqrt[q+1]{t^{q+1}+(y+k)^{q+1}-y^{q+1}}\Big)- f(s,t)\Big]~\mathrm{d}s ~ \mathrm{d}t \Bigg].
        \end{aligned}
                  \end{equation*}
 By the triangle inequality it is plain to see that
  \begin{equation*}
        \begin{aligned}
         &\Big|\big(^{(p,q)}_{(a,c)} \mathfrak{I}^{(\alpha, \beta)}f\big)(x+h,y+k)-\big(^{(p,q)}_{(a,c)} \mathfrak{I}^{(\alpha, \beta)}f\big) (x,y)\Big|\\
          &\le  \frac{(p+1)^{1-\alpha}(q+1)^{1-\beta}}{\Gamma (\alpha) \Gamma (\beta)  } \\&  \Bigg[\int_a ^{a+h} \int_c ^{c+k}|((x+h)^{p+1}-s^{p+1})^{\alpha-1}  ((y+k)^{q+1}-t^{q+1})^{\beta-1} s^{p} t^{q}f(s,t)|~\mathrm{d}s ~ \mathrm{d}t\\ &+         \int_{a+h} ^{x+h} \int_c ^{c+k}|((x+h)^{p+1}-s^{p+1})^{\alpha-1}  ((y+k)^{q+1}-t^{q+1})^{\beta-1} s^{p} t^{q}f(s,t)|~\mathrm{d}s ~ \mathrm{d}t\\ &+         \int_a ^{a+h} \int_{c+k} ^{y+k}|((x+h)^{p+1}-s^{p+1})^{\alpha-1}  ((y+k)^{q+1}-t^{q+1})^{\beta-1} s^{p} t^{q}f(s,t)|~\mathrm{d}s ~ \mathrm{d}t \\ &+        \int_{ \sqrt[p+1]{x^{p+1}-(x+h)^{p+1}+(a+h)^{p+1}}} ^{a} \int_{c+k} ^{y+k} |(x^{p+1}-s^{p+1})^{\alpha-1}  ((y+k)^{q+1}-t^{q+1})^{\beta-1} \\& .s^{p} t^{q} f(\sqrt[p+1]{s^{p+1}+(x+h)^{p+1}-x^{p+1}},t)|~\mathrm{d}s ~ \mathrm{d}t\\
          &+       \int_{a} ^{x} \int_{\sqrt[q+1]{y^{q+1}-(y+k)^{q+1}+(c+k)^{q+1}}} ^{c} |(x^{p+1}-s^{p+1})^{\alpha-1}  (y^{q+1}-t^{q+1})^{\beta-1} \\& .s^{p} t^{q} f(\sqrt[p+1]{s^{p+1}+(x+h)^{p+1}-x^{p+1}},\sqrt[q+1]{t^{q+1}+(y+k)^{q+1}-y^{q+1}})|~\mathrm{d}s ~ \mathrm{d}t\\
                &+
                      \int_a ^x \int_c ^y |(x^{p+1}-s^{p+1})^{\alpha-1}  (y^{q+1}-t^{q+1})^{\beta-1} s^{p} t^{q}|\\& .|f(\sqrt[p+1]{s^{p+1}+(x+h)^{p+1}-x^{p+1}},\sqrt[q+1]{t^{q+1}+(y+k)^{q+1}-y^{q+1}})- f(s,t)|~\mathrm{d}s ~ \mathrm{d}t\Bigg].
         \end{aligned}
                   \end{equation*}
 By the continuity of  $f$, we have: \\ (i)  $|f(t,u)| \le M$ for every $(t,u) \in [a,b]\times [c,d] $ and for some  $M>0$.\\  (ii)  for a  given $\epsilon >0,$ there exists $ \delta > 0 $ such that $|f(t,u)-f(z,w)| < \epsilon $ whenever $\|(t,u)-(z,w)\|_2 < \delta.$\\
 For suitable constants $M_i$, $i=1,2\dots,6$ one can deduce that
 \begin{equation*}
        \begin{aligned}
         &\Big|\big(^{(p,q)}_{(a,c)} \mathfrak{I}^{(\alpha, \beta)}f\big)(x+h,y+k)-\big(^{(p,q)}_{(a,c)} \mathfrak{I}^{(\alpha, \beta)}f\big) (x,y)\Big|\\
          \le & ~ M_1~\int_a ^{a+h} \int_c ^{c+k} ~\mathrm{d}s ~ \mathrm{d}t
          +  M_2~\int_{a+h} ^{x+h} \int_c ^{c+k}((x+h)^{p+1}-s^{p+1})^{\alpha-1}   s^{p} ~\mathrm{d}s ~ \mathrm{d}t\\ &+        M_3 \int_a ^{a+h} \int_{c+k} ^{y+k}  ((y+k)^{q+1}-t^{q+1})^{\beta-1}  t^{q}~\mathrm{d}s ~ \mathrm{d}t \\ &+      M_4  \int_{ \sqrt[p+1]{x^{\rho+1}-(x+h)^{\rho+1}+(a+h)^{\rho+1}}} ^{a} \int_{c+k} ^{y+k}   ((y+k)^{q+1}-t^{q+1})^{\beta-1}  t^{q} ~\mathrm{d}s ~ \mathrm{d}t\\
                    &+      M_5  \int_{a} ^{x} \int_{\sqrt[q+1]{y^{q+1}-(y+k)^{q+1}+(c+k)^{q+1}}} ^{c} (x^{p+1}-s^{p+1})^{\alpha-1}   s^{p} ~\mathrm{d}s ~ \mathrm{d}t\\
                          &+
                                M_6~ \epsilon~ \int_a ^x \int_c ^y (x^{p+1}-s^{p+1})^{\alpha-1}  (y^{q+1}-t^{q+1})^{\beta-1} s^{p} t^{q}~\mathrm{d}s ~ \mathrm{d}t.
         \end{aligned}
                   \end{equation*}
 Further calculations yield
\begin{equation*}
        \begin{aligned}
         &\Big|\big(^{(p,q)}_{(a,c)} \mathfrak{I}^{(\alpha, \beta)}f\big)(x+h,y+k)-\big(^{(p,q)}_{(a,c)} \mathfrak{I}^{(\alpha, \beta)}f\big) (x,y)\Big|\\
          \le &~  M_1 hk +  \bar{M_2}k +        \bar{M_3} h + \bar{M_6}~ \epsilon \\ &+      \bar{M_4} (a- \sqrt[p+1]{x^{p+1}-(x+h)^{p+1}+(a+h)^{p+1}} \\
                    &+     \bar{ M_5}   (c- \sqrt[q+1]{y^{q+1}-(y+k)^{q+1}+(c+k)^{q+1}}.
             \end{aligned}
         \end{equation*}
From the above estimate we infer that $^{(p,q)}_{(a,c)} \mathfrak{I}^{(\alpha, \beta)}f$  is continuous.
\end{proof}
Let us recall the notation  $$ \Delta_{10} f(x_i,y_j)=f(x_{i+1},y_j)-f(x_i,y_j), \quad
       \Delta_{01} f(x_i,y_j)=f(x_i,y_{j+1})-f(x_i,y_j).$$
Let us call  $f$  to be  monotone if $ \triangle_{10} f \ge 0$ and $ \triangle_{01} f \ge 0.$ This next lemma can be viewed as the bivariate analogue of \textup{\cite[Theorem 6.6]{ZW}}; see  also \textup{\cite[Lemma 2.2]{Liang}}. For a proof,   we refer to \cite{SVarx}.
\begin{lemma}\label{Katulem}
If $f:[a,b] \times [c,d] \rightarrow \mathbb{R} $ is of bounded variation in the sense of Arzel\'a, then the following holds:
\begin{enumerate}

\item  If $f(a,c) \ge 0,$ then there exist monotone functions $f_1$ and $f_2$ such that $f=f_1 -f_2$ with $f_1(a,c) \ge 0$ and $f_2(a,c)=0.$
\item  If $f(a,c) < 0,$ then there exist monotone function $f_1$ and $f_2$ such that $f=f_1 - f_2$ with $f_1(a,c) = 0$ and $f_2(a,c) >0.$

\end{enumerate}
\end{lemma}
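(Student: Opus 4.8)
The plan is to reduce everything to the decomposition furnished by Theorem~\ref{BVK_Arzela} and then to normalize the corner value by a harmless additive shift. First I would invoke Theorem~\ref{BVK_Arzela} to write $f = g_1 - g_2$, where $g_1$ and $g_2$ are bounded and monotone in the present sense, that is, $\Delta_{10} g_i \ge 0$ and $\Delta_{01} g_i \ge 0$ for $i = 1,2$. This already gives a decomposition of the required form, except that the values $g_1(a,c)$ and $g_2(a,c)$ are arbitrary, so the whole task is to re-arrange the two pieces so that one of them vanishes at $(a,c)$ while the sign of the other is controlled.

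The key observation I would isolate is that monotonicity is invariant under subtraction of a constant: for any $\lambda \in \mathbb{R}$ and any monotone $g$, the finite differences satisfy $\Delta_{10}(g - \lambda) = \Delta_{10} g \ge 0$ and $\Delta_{01}(g - \lambda) = \Delta_{01} g \ge 0$, and boundedness is obviously retained, so $g - \lambda$ is again a bounded monotone function. Consequently I am free to shift both pieces by the same constant without disturbing their difference $g_1 - g_2 = f$. In the case $f(a,c) \ge 0$ I would put $f_1 := g_1 - g_2(a,c)$ and $f_2 := g_2 - g_2(a,c)$; then $f_1 - f_2 = f$, both functions remain bounded and monotone, $f_2(a,c) = 0$, and $f_1(a,c) = g_1(a,c) - g_2(a,c) = f(a,c) \ge 0$, which is exactly item (1). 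In the case $f(a,c) < 0$ I would instead subtract $g_1(a,c)$ from each piece, setting $f_1 := g_1 - g_1(a,c)$ and $f_2 := g_2 - g_1(a,c)$; this gives $f_1 - f_2 = f$, $f_1(a,c) = 0$, and $f_2(a,c) = g_2(a,c) - g_1(a,c) = -f(a,c) > 0$, which is item (2).

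There is no substantial obstacle here; the argument is the bivariate transcription of the univariate normalization in \cite{Liang} and \cite{ZW}, and all the real content is already packaged in Theorem~\ref{BVK_Arzela}. The only point demanding any care is the bookkeeping of which corner constant to subtract in each case, so that the sign conditions $f_1(a,c) \ge 0$, $f_2(a,c) = 0$ (respectively $f_1(a,c) = 0$, $f_2(a,c) > 0$) come out correctly; the translation invariance of $\Delta_{10}$ and $\Delta_{01}$ then guarantees that monotonicity and boundedness survive the shift, so nothing further needs to be verified.
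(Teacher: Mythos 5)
Your proof is correct and complete: Theorem~\ref{BVK_Arzela} supplies the decomposition $f=g_1-g_2$ into bounded monotone pieces, the finite differences $\Delta_{10}$ and $\Delta_{01}$ are invariant under subtraction of a constant, and your choice of which corner value to subtract in each case yields exactly the sign conditions claimed. The paper itself gives no proof of this lemma, deferring instead to \cite{SVarx}, but the normalization-by-shift argument you give is the natural one and matches the univariate precedent in \cite{Liang} and \cite{ZW} that the lemma is modeled on.
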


\begin{theorem}\label{BVK_boundedvar}
Let   $ p,q >-1$ and $\alpha, \beta >0.$
If $f$ is of bounded variation on $[a,b] \times [c,d]$ in the sense of Arzel\'a, then the mixed Katugampola integral
$^{(p,q)}_{(a,c)} \mathfrak{I}^{(\alpha, \beta)}f$   is also of bounded variation on $[a,b] \times [c,d].$
\end{theorem}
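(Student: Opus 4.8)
The plan is to reduce the statement to a single monotonicity-preservation property of the operator $^{(p,q)}_{(a,c)}\mathfrak{I}^{(\alpha,\beta)}$ and then to invoke the characterization of Arzelà bounded variation in Theorem \ref{BVK_Arzela}. First, by Lemma \ref{Katulem} I would write $f = f_1 - f_2$, where $f_1,f_2$ are monotone (i.e.\ $\Delta_{10}f_i \ge 0$ and $\Delta_{01}f_i \ge 0$). Because Lemma \ref{Katulem} controls the base-point values, both pieces are in fact nonnegative on the whole rectangle: $\Delta_{10}f_i\ge 0$ and $\Delta_{01}f_i\ge 0$ force $f_i(x,y)\ge f_i(a,y)\ge f_i(a,c)\ge 0$ in either case of the lemma. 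Since the operator is linear, $^{(p,q)}_{(a,c)}\mathfrak{I}^{(\alpha,\beta)}f = {}^{(p,q)}_{(a,c)}\mathfrak{I}^{(\alpha,\beta)}f_1 - {}^{(p,q)}_{(a,c)}\mathfrak{I}^{(\alpha,\beta)}f_2$, so by Theorem \ref{BVK_Arzela} it suffices to prove that the Katugampola integral of a nonnegative monotone bounded function is again bounded and monotone.

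The central step is therefore the claim: \emph{if $\phi\ge 0$ is monotone, then $g := {}^{(p,q)}_{(a,c)}\mathfrak{I}^{(\alpha,\beta)}\phi$ satisfies $\Delta_{10}g\ge 0$ and $\Delta_{01}g\ge 0$}. To prove it I would disentangle the two ways in which the point $(x,y)$ enters the defining integral, namely through the singular kernel and through the limits of integration. Applying the substitutions $u=s^{p+1}$, $v=t^{q+1}$ (legitimate since $p,q>-1$ and $a,c>0$), exactly as in the boundedness theorem proved above, recasts $g(x,y)$ as a constant multiple of $\int_{a^{p+1}}^{x^{p+1}}\int_{c^{q+1}}^{y^{q+1}}(x^{p+1}-u)^{\alpha-1}(y^{q+1}-v)^{\beta-1}\tilde\phi(u,v)\,\mathrm{d}u\,\mathrm{d}v$, where $\tilde\phi(u,v)=\phi\big(u^{1/(p+1)},v^{1/(q+1)}\big)$ is again nonnegative and monotone, since $u\mapsto u^{1/(p+1)}$ and $v\mapsto v^{1/(q+1)}$ are increasing. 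A further shift $\sigma=x^{p+1}-u$, $\tau=y^{q+1}-v$ transfers all of the $(x,y)$-dependence into the integrand and into the upper limits, giving
$$g \;\propto\; \int_0^{x^{p+1}-a^{p+1}}\int_0^{y^{q+1}-c^{q+1}} \sigma^{\alpha-1}\tau^{\beta-1}\,\tilde\phi\big(x^{p+1}-\sigma,\;y^{q+1}-\tau\big)\,\mathrm{d}\sigma\,\mathrm{d}\tau.$$

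Now I would fix $y$ and take $x'>x$. Splitting the $\sigma$-integral at $x^{p+1}-a^{p+1}$, the additional slab $\int_{x^{p+1}-a^{p+1}}^{x'^{\,p+1}-a^{p+1}}$ contributes a nonnegative amount because $\tilde\phi\ge 0$ and the kernel is nonnegative, while on the common region the integrand increases since $x'^{\,p+1}-\sigma>x^{p+1}-\sigma$ and $\tilde\phi$ is nondecreasing in its first argument; hence $\Delta_{10}g\ge 0$, and $\Delta_{01}g\ge 0$ follows by the symmetric argument in $y$. Boundedness of $g$ is already supplied by the boundedness theorem. Applying this to $\phi=f_1$ and $\phi=f_2$ exhibits $^{(p,q)}_{(a,c)}\mathfrak{I}^{(\alpha,\beta)}f$ as a difference of two bounded monotone functions, so Theorem \ref{BVK_Arzela} gives that it is of bounded variation in Arzelà's sense.

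I expect the monotonicity-preservation step to be the only genuine obstacle: the difficulty is precisely that $(x,y)$ simultaneously governs the singular kernel and the domain of integration, and the two changes of variable are what cleanly separate these effects so that the sign information ($\tilde\phi\ge 0$ together with $\tilde\phi$ monotone) can be exploited. Some care is also needed to record that both substitutions are valid under the standing hypotheses $0<a<b$, $0<c<d$, $p,q>-1$, and that the reparametrizations $x\mapsto x^{p+1}$, $y\mapsto y^{q+1}$ are increasing, so that monotonicity in the variables $X=x^{p+1}$, $Y=y^{q+1}$ transfers back to monotonicity in $x$ and $y$.
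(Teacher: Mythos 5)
Your proposal is correct and follows essentially the same route as the paper: decompose $f=f_1-f_2$ via Lemma \ref{Katulem}, use linearity, and show the operator preserves monotonicity of a nonnegative monotone piece by a change of variables that aligns the kernels, splitting the increment into a nonnegative ``extra slab'' term (using $f_i\ge 0$) plus a term controlled by monotonicity of $f_i$. The paper performs the equivalent substitution $x_1^{p+1}-s^{p+1}=x_2^{p+1}-u^{p+1}$ directly rather than via your intermediate reparametrization $u=s^{p+1}$, but the computation and the appeal to Theorem \ref{BVK_Arzela} are the same.
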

\begin{proof}
Since $f$ is of bounded variation on $[a,b] \times [c,d]$ in the sense of Arzel\'a, by Lemma \ref{Katulem} it follows that $$f(x,y)=f_1(x,y)-f_2(x,y) ~~~~\forall ~~(x,y) \in [a,b] \times [c,d],$$ where $f_1$ and $f_2$ are monotone functions. It is enough to show that $^{(p,q)}_{(a,c)} \mathfrak{I}^{(\alpha, \beta)}f$ is a difference of two monotone functions.\par First let us assume $f(a,c) \ge 0. $  By Lemma \ref{Katulem}, we can choose $f_1(a,c) \ge 0$ and $f_2(a,c) = 0.$ Define functions $F_1$ and $F_2$ as follows.
$$F_1(x,y):=  \big(^{(p,q)}_{(a,c)}\mathfrak{I}^{(\alpha, \beta)}f_1\big)(x,y), \quad F_2(x,y):= \big(^{(p,q)}_{(a,c)}\mathfrak{I}^{(\alpha, \beta)}f_1\big)(x,y).$$
By the linearity of the mixed Katugampola fractional integral  $$\big(^{(p,q)}_{(a,c)}\mathfrak{I}^{(\alpha, \beta)}f\big)(x,y)=F_1(x,y)-F_2(x,y).$$
 Now we show that $F_1$ and $F_2$ are monotone functions. To this end, let $a\le x_1 \le x_2 \le b$ and $ c \le y \le d.$
 \small
\begin{equation*}
    \begin{aligned}
     F_1(x_2,y)-F_1(x_1,y)=& ~ \frac{(p+1)^{1-\alpha}(q+1)^{1-\beta}}{\Gamma (\alpha) \Gamma (\beta) }  \int_a ^{x_2} \int_c ^{y} (x_2^{p+1}-s^{p+1})^{\alpha-1} (y^{q+1}-t^{q+1})^{\beta-1}\\&  s^p t^q f_1(s,t)~\mathrm{d}s~\mathrm{d}t - \frac{(p+1)^{1-\alpha}(q+1)^{1-\beta}}{\Gamma (\alpha) \Gamma (\beta) }  \int_a ^{x_1} \int_c ^y (x_1^{p+1}-s^{p+1})^{\alpha-1}\\& (y^{q+1}-t^{q+1})^{\beta-1} s^p t^q f_1(s,t)~\mathrm{d}s~\mathrm{d}t.
           \end{aligned}
               \end{equation*}
               \normalsize
Putting $x_1^{p+1}-s^{p+1}=x_2^{p+1}-u^{p+1}$ in the second term, we have $s^{p}~\mathrm{d}s=u^{p}~\mathrm{d}u$. The limits of integration are $u=\sqrt[p+1]{x_2^{p+1}-x_1^{p+1}+a^{p+1}}$ and $u=x_2.$ Therefore, the second term in the right hand side of the above equation takes the following form
\begin{equation*}
\begin{split}
\frac{(p+1)^{1-\alpha}(q+1)^{1-\beta}}{\Gamma (\alpha) \Gamma (\beta)  } \int_{\sqrt[p+1]{x_2^{p+1}-x_1^{p+1}+a^{p+1}}} ^{x_2} \int_c ^y 
(x_2^{p+1}-u^{p+1})^{\alpha-1} & \\ .(y^{q+1}-t^{q+1})^{\beta-1}      u^{p} t^q
f_1(\sqrt[p+1]{u^{p+1}+x_1^{p+1}-x_2^{p+1}},t)~\mathrm{d}u~\mathrm{d}t.
\end{split}
\end{equation*}
Consequently,
\small
\begin{equation*}
    \begin{aligned}
    & F_1(x_2,y)-F_1(x_1,y)\\
              =& ~ \frac{(p+1)^{1-\alpha}(q+1)^{1-\beta}}{\Gamma (\alpha) \Gamma (\beta)  } \Bigg[ \int_a ^{\sqrt[p+1]{{x_2^{p+1}-x_1^{p+1}+a^{p+1}}}} \int_c ^{y} (x_2^{p+1}-s^{p+1})^{\alpha-1} (y^{q+1}-t^{q+1})^{\beta-1} \\ & .s^p t^q f_1(s,t)~\mathrm{d}s~\mathrm{d}t + \int_{\sqrt[p+1]{x_2^{p+1}-x_1^{p+1}+a^{p+1}}} ^{x_2} \int_c ^y (x_2^{p+1}-s^{p+1})^{\alpha-1} (y^{q+1}-t^{q+1})^{\beta-1}  s^p t^q \\
        & .\Big(f_1(s,t)  - f_1(\sqrt[p+1]{s^{p+1}+x_1^{p+1}-x_2^{p+1}},t)\Big)~\mathrm{d}s~\mathrm{d}t\Bigg].
                   \end{aligned}
               \end{equation*}
               \normalsize
Since $ \sqrt[p+1]{s^{p+1}+x_1^{p+1}-x_2^{p+1} }\le s, $ $ f_1(a,c)\ge 0$ and $f_1$ is increasing with respect to the first variable, it follows that  $F_1(x_2,y)-F_1(x_1,y) \ge 0,$ that is, $\triangle_{10}F_1 \ge 0.$ On similar lines, for $ c \le y_1 \le y_2 \le d$ and $a \le x \le b,$ we arrive at $\triangle_{01}F_1 \ge 0.$ Therefore, $F_1$ is monotone. Similarly, $F_2$ is  monotone as well.\\
If $f(a,c)<0,$  by  Lemma \ref{Katulem} we choose  $f_1$ and $f_2$ such that  $f_1(a,c)=0$ and $f_2(a,c)>0.$
As in the previous case, we deduce that $F_1$ and $F_2$ are monotone functions, completing the proof.
\end{proof}
Here is  a preparatory lemma which may be viewed as a bivariate analogue of the main theorem in Liang \textup{\cite[Theorem 1.5]{Liang}}.
\begin{lemma}\label{BVK_main}
If $f :[a,b] \times [c,d] \rightarrow \mathbb{R}$ is continuous and of bounded variation in the sense of Arzel\'{a}, then $\dim_B(G(f))=\dim_H (G(f)) =2.$
\end{lemma}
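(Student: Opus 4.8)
The plan is to prove the two-sided estimate $2 \le \dim_H(G(f)) \le \overline{\dim}_B(G(f)) \le 2$, after which the general chain $\dim_H \le \underline{\dim}_B \le \overline{\dim}_B$ collapses everything to $2$. The lower bound is essentially free: since $f$ is continuous and its domain $[a,b]\times[c,d]$ has Hausdorff dimension $2$, the earlier lemma asserting $\dim_H(G(f))\ge \dim_H(A)$ for a continuous $f$ on $A$ gives $\dim_H(G(f)) \ge 2$. Thus the entire burden falls on the upper bound $\overline{\dim}_B(G(f)) \le 2$.

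For the upper bound I would feed the bounded variation hypothesis into the counting estimate of Lemma \ref{Falbiv}. Fix $0<\delta<\min\{b-a,d-c\}$ and a partition into sub-rectangles $A_{ij}$, $i=1,\dots,m$, $j=1,\dots,n$, with $m\approx(b-a)/\delta$ and $n\approx(d-c)/\delta$. Lemma \ref{Falbiv} gives
\[
N_\delta(G(f)) \le 2mn + \frac{1}{\delta}\sum_{j=1}^n\sum_{i=1}^m R_f[A_{ij}].
\]
The term $2mn$ is already of order $\delta^{-2}$, so it remains only to show that $\sum_{i,j}R_f[A_{ij}]$ is of order $\delta^{-1}$. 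By Lemma \ref{Katulem} write $f=f_1-f_2$ with $f_1,f_2$ monotone, and note that the maximal range is subadditive, $R_f[A_{ij}]\le R_{f_1}[A_{ij}]+R_{f_2}[A_{ij}]$; hence it suffices to estimate $\sum_{i,j}R_g[A_{ij}]$ for a single monotone $g$.

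Here is the crux. If $g$ is nondecreasing in each variable and $A_{ij}=[x_{i-1},x_i]\times[y_{j-1},y_j]$, then monotonicity forces the supremum of $g$ over $A_{ij}$ at the corner $(x_i,y_j)$ and the infimum at $(x_{i-1},y_{j-1})$, so $R_g[A_{ij}] \le g(x_i,y_j)-g(x_{i-1},y_{j-1})$. Writing the right-hand side as $\big(g(x_i,y_j)-g(x_{i-1},y_j)\big)+\big(g(x_{i-1},y_j)-g(x_{i-1},y_{j-1})\big)$ and telescoping the first group in $i$ (for fixed $j$) and the second in $j$ (for fixed $i$), monotonicity bounds each resulting one-dimensional sum by the total increment $g(b,d)-g(a,c)$, giving
\[
\sum_{j=1}^n\sum_{i=1}^m R_g[A_{ij}] \le (m+n)\big(g(b,d)-g(a,c)\big) = O(\delta^{-1}).
\]
Consequently $\frac{1}{\delta}\sum_{i,j}R_f[A_{ij}]=O(\delta^{-2})$, and combined with $2mn=O(\delta^{-2})$ this yields $N_\delta(G(f))\le C\delta^{-2}$ for some constant $C$. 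Passing to $\log N_\delta(G(f))/(-\log\delta)$ and letting $\delta\to 0$ gives $\overline{\dim}_B(G(f))\le 2$, which together with the lower bound forces $\dim_H(G(f))=\underline{\dim}_B(G(f))=\overline{\dim}_B(G(f))=2$. The main obstacle is exactly this oscillation estimate: the telescoping cancellation together with the monotone decomposition of Lemma \ref{Katulem} is what converts the abstract hypothesis of bounded variation in Arzel\'a's sense into the quantitative bound $\sum_{i,j}R_f[A_{ij}]=O(\delta^{-1})$; for a general continuous $f$ the oscillation sum could be as large as $O(\delta^{-2})$, which would push the box dimension strictly above $2$.
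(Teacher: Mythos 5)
Your proposal is correct, and its overall skeleton coincides with the paper's: the lower bound $\dim_H(G(f))\ge 2$ comes for free from continuity of $f$ on a domain of Hausdorff dimension $2$, and the upper bound is extracted from the counting estimate of Lemma \ref{Falbiv} once one knows that the oscillation sum $\sum_{i,j}R_f[A_{ij}]$ is $O(\delta^{-1})$. The difference lies entirely in how that crucial quantitative input is obtained. The paper does not prove it: it invokes the classical fact from Clarkson--Adams \cite{James} that for a function of bounded variation in Arzel\`a's sense the quantity $\sum_r D\,\omega_r$ (mesh size times oscillation, summed over the squares of a net) stays bounded as the net is refined. You instead derive the same bound from first principles: decompose $f=f_1-f_2$ into Arzel\`a-monotone pieces (Theorem \ref{BVK_Arzela} already suffices here; the corner normalization of Lemma \ref{Katulem} is not needed), observe that for a monotone $g$ the range over $A_{ij}$ is exactly $g(x_i,y_j)-g(x_{i-1},y_{j-1})$, split this increment into a horizontal and a vertical difference, and telescope each family to get $\sum_{i,j}R_g[A_{ij}]\le (m+n)\bigl(g(b,d)-g(a,c)\bigr)=O(\delta^{-1})$. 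I checked the telescoping: for fixed $j$ the horizontal differences sum to $g(b,y_j)-g(a,y_j)\le g(b,d)-g(a,c)$, and symmetrically for the vertical ones, so the bound is right. Your route buys a self-contained proof that makes explicit why bounded variation (rather than mere continuity) is what caps the oscillation sum at $O(\delta^{-1})$ instead of the worst-case $O(\delta^{-2})$; the paper's route is shorter but leans on an external reference for exactly the step that carries all the content.
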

\begin{proof}
Since $f :[a,b] \times [c,d] \rightarrow \mathbb{R}$ is continuous $\dim_H(G(f)) \ge
     \dim_H \big( [a,b] \times [c,d] \big)=2$.  Thus $$2 \le \dim_H(G(f)) \le  \underline{\dim}_B(G(f))\le  \overline{\dim}_B(G(f)).$$
\par Consider a square net, that is, a set of parallels to the coordinate axes $x=x_i$ and $y=y_j$ such that $x_{i+1}-x_i = y_{j+1}-y_j=D$ is a constant independent of $i,j$ that covers the whole plane. A finite number of squares so determined  will contain points of $[a,b] \times [c,d]$. Denote by $\omega_r$ the oscillation of $f(x,y)$ in the $r$-th square.  Since $f :[a,b] \times [c,d] \rightarrow \mathbb{R}$ is of bounded variation in the sense of Arzel\'{a}, it follows  that \cite{James} $\sum_{r} D w_r$ is bounded for all such nets in which $D$ is less than a fixed number.
\par
Let $ 0 < \delta < \min\{b-a,d-c \}$, $ \frac{b-a}{\delta} \le m \le 1+ \frac{b-a}{\delta }$ and $ \frac{d-c}{\delta} \le n \le 1+ \frac{d-c}{\delta }$ for some $m ,n \in \mathbb{N}.$ From Lemma \ref{Falbiv} the number of $\delta-$cubes that intersect $G(f)$ is
$$N_{\delta}(G(f)) \le 2mn + \frac{1}{\delta} \sum_{j=1}^{n} \sum_{i=1}^{m} R_f[A_{ij}].$$
 From the previous paragraph it follows that  $ \sum_{j=1}^{n} \sum_{i=1}^{m}  \delta R_f[A_{ij}]$ is bounded for all $\delta$ where $ 0< \delta < \delta_0$ for some fixed $\delta_0 >0.$
 To calculate the box dimension of $G(f)$, it suffices to work with  small $\delta -$cover of $G(f)$ and hence assume that $ \sum_{j=1}^{n} \sum_{i=1}^{m} \delta R_f[A_{ij}]$ is bounded for all sufficiently small $\delta >0.$ That is, there exists a constant  $K >0$ such that $$\sum_{j=1}^{n} \sum_{i=1}^{m} R_f[A_{ij}] =  \sum_{j=1}^{n} \sum_{i=1}^{m} \omega_{ij} \le K.\frac{1}{\delta}$$ for sufficiently small $\delta>0$.
 Consequently,
$$ \overline{\lim}_{\delta \rightarrow 0} \frac{\log N_{\delta}(G(f))}{-\log \delta} \le \lim_{\delta \rightarrow 0}\frac{\log(2mn + \frac{1}{\delta}K\frac{1}{\delta})}{-\log \delta},$$
which on calculation produces $$ \overline{\dim}_B(G(f))= \overline{\lim}_{\delta \rightarrow 0} \frac{\log N_{\delta}(G_f)}{-\log \delta} \le 2 ,$$
completing the proof.
\end{proof}
The upcoming theorem which provides the box dimension and Hausdorff dimension of the graph of the mixed Katugampola integral can be deduced as a direct consequence of the previous lemma, Theorem \ref{BVK_Conti} and Theorem \ref{BVK_boundedvar}.
\begin{theorem}\label{BVK_main2}
Let   $ p,q >-1$ and $\alpha, \beta >0.$ Suppose that $f$ is a continuous function of bounded variation on $[a,b] \times [c,d]$. Then $$\dim_B\Big(G\big(^{(p,q)}_{(a,c)} \mathfrak{I}^{(\alpha, \beta)}f\big)\Big)=\dim_H\Big(G\big(^{(p,q)}_{(a,c)} \mathfrak{I}^{(\alpha, \beta)}f\big)\Big)=2.$$
\end{theorem}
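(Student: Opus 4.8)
The plan is to assemble the statement directly from the three ingredients already established, since the hypotheses of Theorem \ref{BVK_main2} are tailored precisely so that the intermediate function
$$g := {}^{(p,q)}_{(a,c)} \mathfrak{I}^{(\alpha, \beta)}f$$
inherits the two properties needed to invoke Lemma \ref{BVK_main}. First I would verify that $g$ is continuous: since $f$ is continuous on $[a,b] \times [c,d]$ and the parameters satisfy $p,q > -1$, $\alpha, \beta > 0$, Theorem \ref{BVK_Conti} guarantees that $g$ is continuous on the same rectangle. Next I would verify that $g$ is of bounded variation: since $f$ is of bounded variation in the sense of Arzel\'a, Theorem \ref{BVK_boundedvar} guarantees (under the same parameter restrictions) that $g$ is again of bounded variation in the sense of Arzel\'a.

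With both properties in hand, I would apply Lemma \ref{BVK_main} to the function $g$ in place of $f$. That lemma asserts that any continuous function of bounded variation in Arzel\'a's sense, defined on a rectangle, has a graph whose box dimension and Hausdorff dimension both equal $2$. Applying it to $g$, which lives on the same rectangle $[a,b] \times [c,d]$, yields at once
$$\dim_B\big(G(g)\big) = \dim_H\big(G(g)\big) = 2,$$
which is exactly the desired conclusion.

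There is no genuine obstacle here: the entire analytic content has been discharged in the proofs of Theorem \ref{BVK_Conti} and Theorem \ref{BVK_boundedvar} (continuity and preservation of bounded variation under the mixed Katugampola integral) and in Lemma \ref{BVK_main} (the dimension computation for continuous functions of bounded variation). The only verification required is the compatibility of hypotheses --- namely that the parameter ranges $p,q > -1$ and $\alpha, \beta > 0$ assumed here are precisely those demanded by the two preservation theorems, and that $g$ is defined on the same domain so that Lemma \ref{BVK_main} applies verbatim. Both checks are immediate, so the theorem follows as a direct corollary.
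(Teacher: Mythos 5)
Your proposal is correct and coincides with the paper's own argument: the authors likewise deduce Theorem \ref{BVK_main2} immediately by combining Theorem \ref{BVK_Conti} (continuity is preserved), Theorem \ref{BVK_boundedvar} (bounded variation in Arzel\'a's sense is preserved), and Lemma \ref{BVK_main} (a continuous function of bounded variation has graph of box and Hausdorff dimension $2$). No further comment is needed.
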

\begin{remark}
 Let $g:[a,b] \rightarrow \mathbb{R}$ be continuous. We define a bivariate function $f :[a,b] \times [c,d] \rightarrow \mathbb{R}$ by $f(x,y)=g(x).$ Clearly,  $f$ is continuous on $[a,b] \times [c,d].$ We have 
\begin{equation*}
\begin{aligned} 
\big(^{(p,q)}_{(a,c)} \mathfrak{I}^{(\alpha, \beta)}f\big)(x,y)= & \frac{(p+1)^{1-\alpha}(q+1)^{1-\beta}}{\Gamma (\alpha) \Gamma (\beta )} \int_a ^x \int_c ^y (x^{p+1}-s^{p+1})^{\alpha-1}  \\ & .(y^{q+1}-t^{q+1})^{\beta-1}s^{p} t^{q}f(s,t)~\mathrm{d}s~\mathrm{d}t.
\end{aligned}
\end{equation*}
For $\beta=1$ and $q=0$ we obtain $$  \big(^{(p,0)}_{(a,c)} \mathfrak{I}^{(\alpha, 1)}f\big)(x,y)= \frac{(p+1)^{1-\alpha}}{\Gamma ( \alpha ) } \int_a ^x \int_c ^y (x^{p+1}-s^{p+1})^{\alpha-1}  s^{p} f(s,t)~\mathrm{d}s~\mathrm{d}t .$$
\begin{equation*}
 \begin{aligned}
 \big(^{(p,0)}_{(a,c)} \mathfrak{I}^{(\alpha, 1)}f\big)(x,y)=&~ \frac{(p+1)^{1-\alpha}}{\Gamma ( \alpha ) } \int_a ^x \int_c ^y (x^{p+1}-s^{p+1})^{\alpha-1}  s^{p} f(s,t)~\mathrm{d}s~\mathrm{d}t\\
 =&~ (y-c)\frac{(p+1)^{1-\alpha}}{\Gamma (\alpha ) } \int_a ^x  (x^{p+1}-s^{p+1})^{\alpha-1}  s^{p} g(s)~\mathrm{d}s\\= & ~ (y-c)\frac{(p+1)^{1-\alpha}}{\Gamma (\alpha ) } \int_a ^x  (x^{p+1}-s^{p+1})^{\alpha-1}  s^{p} g(s)~\mathrm{d}s\\
 =&~(y-c) \big(^{p}_a \mathfrak{I}^{\alpha}g\big)(x),
\end{aligned}
 \end{equation*}
 where $^{p}_a \mathfrak{I}^{\alpha}g$ is the standard Katugampola fractional integral of the univariate function $g$.
 By the previous relation between the mixed Katugampola fractional integral and Katugampola fractional integral we have

 $$  \dim_B \big(G\big(^{(p,0)}_{(a,c)} \mathfrak{I}^{(\alpha, 1)}f\big) \big) \le \dim_B \big(G( ^{p}_a \mathfrak{I}^{\alpha}g) \big)+1, $$
whenever $g$ is continuous. If $g$ is a continuous function  of bounded variation on $[a,b]$, by \textup{\cite[Theorem 3.8]{SV}}
$$  \dim_B \big(G( ^{p}_a \mathfrak{I}^{\alpha}g) \big)=1.  $$
Consequently, $\dim_B \big(G\big(^{(p,0)}_{(a,c)} \mathfrak{I}^{(\alpha, 1)}f\big) \big)=2$. This corroborates the previous theorem that
$\dim_B \big(G\big(^{(p,0)}_{(a,c)} \mathfrak{I}^{(\alpha, 1)}f\big) \big)=2$.
\end{remark}
The following theorem  provides an upper bound for the upper box dimension of the graph of the mixed Katugampola fractional integral of a continuous function.
\begin{theorem}\label{Holder_bivariate}
For $0 < a < b < \infty,$ $-1 < p,q \le 0$ and $0 < \alpha, \beta < 1.$
If $f : [a,b]\times [c,d] \to \mathbb{R}$ is continuous, then the upper box dimension of the graph of the mixed Katugampola fractional integral of $f$ is at most $ 3 - \min\{\alpha,\beta\}.$
\end{theorem}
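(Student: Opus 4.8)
The plan is to realize the mixed Katugampola integral $F := {}^{(p,q)}_{(a,c)}\mathfrak{I}^{(\alpha,\beta)}f$ as a function satisfying a H\"older condition with exponent $s=\min\{\alpha,\beta\}$ and then to invoke Corollary \ref{BVK_holder_dim}(1). Since $f$ is continuous on the compact rectangle it is bounded, say $|f|\le M$, and $F$ is continuous by Theorem \ref{BVK_Conti}, so both hypotheses of the corollary are in place once the H\"older bound is established. To produce that bound I would decompose the increment through an intermediate point,
\[
F(x+h,y+k)-F(x,y)=\big[F(x+h,y+k)-F(x,y+k)\big]+\big[F(x,y+k)-F(x,y)\big],
\]
so that it suffices to bound a pure $x$-increment (uniformly in the frozen second variable) and a pure $y$-increment (uniformly in the frozen first variable).

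For the pure $x$-increment I would exploit the product structure of the kernel. Freezing the second argument at $y'$ and setting $\tilde f(s):=\int_c^{y'}((y')^{q+1}-t^{q+1})^{\beta-1}t^{q}f(s,t)\,\mathrm{d}t$, the substitution $v=t^{q+1}$ together with $y'\le d$ shows that $\tilde f$ is bounded by a constant $M''$ that is \emph{independent} of $y'$. Moreover $F(\cdot,y')$ is then, up to the factor $\tfrac{(q+1)^{1-\beta}}{\Gamma(\beta)}$, exactly the univariate Katugampola integral ${}^{p}_{a}\mathfrak{I}^{\alpha}\tilde f$. Thus the whole matter reduces to the one-variable assertion that the Katugampola integral of a bounded function is H\"older continuous with exponent $\alpha$ on $[a,b]$.

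This one-variable estimate is the heart of the argument. Writing the difference and splitting the outer integral at $x$ produces a near-diagonal piece $\int_x^{x+h}$ and a kernel-difference piece $\int_a^x\big[((x+h)^{p+1}-s^{p+1})^{\alpha-1}-(x^{p+1}-s^{p+1})^{\alpha-1}\big]s^{p}\tilde f(s)\,\mathrm{d}s$. Because $\alpha-1<0$, the bracket has a fixed sign, so after the substitution $u=s^{p+1}$ both pieces integrate explicitly and are controlled by a constant multiple of $\big((x+h)^{p+1}-x^{p+1}\big)^{\alpha}$. The decisive elementary inequality, and the place where I expect the real work, is that for $-1<p\le 0$ and $x\ge a>0$ one has $(x+h)^{p+1}-x^{p+1}\le (p+1)a^{p}h$: this is exactly where the hypotheses $-1<p,q\le 0$ and $a>0$ enter, turning the power increment into a genuine $O(h)$ term and hence the two pieces into $O(h^{\alpha})$ with a constant independent of $y'$. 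The same computation with the roles of the two variables interchanged gives the pure $y$-increment bound $O(k^{\beta})$ uniformly in $x'$.

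Combining the two one-variable bounds yields $|F(x+h,y+k)-F(x,y)|\le C_1 h^{\alpha}+C_2 k^{\beta}$. For $\|(h,k)\|_2\le 1$ we have $h,k\le 1$, whence $h^{\alpha}\le h^{\min\{\alpha,\beta\}}\le\|(h,k)\|_2^{\min\{\alpha,\beta\}}$ and likewise for $k$, so that $F$ satisfies a local H\"older condition with exponent $\min\{\alpha,\beta\}$. The local form of Corollary \ref{BVK_holder_dim}(1) then gives $\overline{\dim}_B\big(G(F)\big)\le 3-\min\{\alpha,\beta\}$, which is the claim. The main obstacle throughout is keeping every constant uniform in the frozen variable while taming the singular kernel, and both difficulties are resolved by the monotone sign of the bracket and the power-increment inequality above; once these are in hand the dimension bound is immediate.
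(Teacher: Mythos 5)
Your proof is correct, and the two decisive estimates are exactly the ones the paper relies on: the sign of the kernel difference (so that the integral $\int_a^x\bigl[(x^{p+1}-s^{p+1})^{\alpha-1}-((x+h)^{p+1}-s^{p+1})^{\alpha-1}\bigr]s^p\,\mathrm{d}s$ can be evaluated in closed form after $u=s^{p+1}$ and telescoped to $\bigl((x+h)^{p+1}-x^{p+1}\bigr)^{\alpha}$), and the power-increment inequality $(x+h)^{p+1}-x^{p+1}\le (p+1)a^{p}h$, which is where $-1<p\le 0$ and $a>0$ enter; the paper obtains the same bound via Bernoulli's inequality in its term $K_1$. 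Where you differ is in the organization of the increment. The paper splits $F(x+h,y+k)-F(x,y)$ directly into four bivariate pieces $J_1,\dots,J_4$ corresponding to the rectangles $[a,x]\times[c,y]$, $[a,x]\times[y,y+k]$, $[x,x+h]\times[c,y]$, $[x,x+h]\times[y,y+k]$, and then handles the mixed kernel difference in $J_1$ by inserting the cross term $((x+h)^{p+1}-u)^{\alpha-1}(y^{q+1}-v)^{\beta-1}$; you instead telescope the \emph{function} through the intermediate point $(x,y+k)$ and reduce each one-dimensional increment, with the other variable frozen, to the H\"older continuity of a univariate Katugampola integral of the bounded auxiliary function $\tilde f$. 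Your route is somewhat cleaner in that it isolates the univariate lemma (essentially the estimate from the authors' earlier paper \cite{SV}) and makes the required uniformity in the frozen variable explicit, at the cost of having to check that uniformity; the paper's route keeps everything in two variables and gets the slightly sharper intermediate bounds $|J_4|\le C_4h^{\alpha}k^{\beta}$, though this extra precision is not needed for the final conclusion. Both arguments land on $|F(x+h,y+k)-F(x,y)|\le C(h^{\alpha}+k^{\beta})\le C'\|(h,k)\|_2^{\min\{\alpha,\beta\}}$ and finish by Corollary \ref{BVK_holder_dim}(1), using Theorem \ref{BVK_Conti} for continuity.
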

\begin{proof}
Let $ 0<a\le x <x+h\le b $ and $ 0<c\le y <y+k\le d $.  Note that
\begin{equation*}
    \begin{aligned}
     &   \big(^{(p,q)}_{(a,c)} \mathfrak{I}^{(\alpha, \beta)}f \big)      (x+h, y+k)-~  \big(^{(p,q)}_{(a,c)} \mathfrak{I}^{(\alpha, \beta)}f \big) (x,y)\\=& \frac{(p+1)^{1-\alpha}(q+1)^{1-\beta}}{\Gamma (\alpha) \Gamma (\beta)} \Bigg[\int_a ^{x+h}\int_c ^{y+k} ((x+h)^{p+1}-s^{p+1})^{\alpha-1} ((y+k)^{q+1}-t^{q+1})^{\beta-1}\\&. s^{p}t^{q}f(s,t)~\mathrm{d}s~\mathrm{d}t-
     \int_a ^x \int_c ^y (x^{p+1}-s^{p+1})^{\alpha-1}(y^{q+1}-t^{q+1})^{\beta-1} s^{p} t^{q}f(s,t)~\mathrm{d}s~\mathrm{d}t \Bigg]\\
      =&~ J_1 +J_2+J_3+J_4,
      \end{aligned}
               \end{equation*}
               wherein
               \small
      \begin{equation*}
    \begin{aligned}
      J_1=&~ \frac{(p+1)^{1-\alpha}(q+1)^{1-\beta}}{\Gamma (\alpha) \Gamma (\beta)} \int_a ^x \int_c ^y \Big[((x+h)^{p+1}-s^{p+1})^{\alpha-1} ((y+k)^{q+1}-t^{q+1})^{\beta-1}\\&- (x^{p+1}-s^{p+1})^{\alpha-1}(y^{q+1}-t^{q+1})^{\beta-1}\Big] s^{p} t^{q}f(s,t)~\mathrm{d}s~\mathrm{d}t,\\
        J_2=&~ \frac{(p+1)^{1-\alpha}(q+1)^{1-\beta}}{\Gamma (\alpha) \Gamma (\beta)} \int_a ^x \int_y ^{y+k} ((x+h)^{p+1}-s^{p+1})^{\alpha-1} ((y+k)^{q+1}-t^{q+1})^{\beta-1} \\&.s^{p} t^{q}f(s,t)~\mathrm{d}s~\mathrm{d}t,\\
        J_3=&~ \frac{(p+1)^{1-\alpha}(q+1)^{1-\beta}}{\Gamma (\alpha) \Gamma (\beta)} \int_x ^{x+h} \int_c ^y ((x+h)^{p+1}-s^{p+1})^{\alpha-1} ((y+k)^{q+1}-t^{q+1})^{\beta-1}\\&. s^{p} t^{q}f(s,t)~\mathrm{d}s~\mathrm{d}t,\\
        J_4=&~ \frac{(p+1)^{1-\alpha}(q+1)^{1-\beta}}{\Gamma (\alpha) \Gamma (\beta)} \int_x ^{x+h} \int_y ^{y+k} ((x+h)^{p+1}-s^{p+1})^{\alpha-1} ((y+k)^{q+1}-t^{q+1})^{\beta-1}\\&. s^{p} t^{q}f(s,t)~\mathrm{d}s~\mathrm{d}t.\\
            \end{aligned}
               \end{equation*}
               \normalsize
Since $f$ is continuous on $[a,b]\times [c,d],$ there exists  $M>0$ such that $$|f(t,u)| \le M ~~ \forall~ (t,u) \in [a,b] \times [c,d].$$ Next we shall bound  $J_1$  as follows:
\begin{equation*}
\begin{aligned}
      |J_1| \le&  \frac{M(p+1)^{1-\alpha}(q+1)^{1-\beta}}{\Gamma (\alpha) \Gamma (\beta)} \int_a ^x \int_c ^y \Big[ (x^{p+1}-s^{p+1})^{\alpha-1}(y^{q+1}-t^{q+1})^{\beta-1}\\&~-((x+h)^{p+1}-s^{p+1})^{\alpha-1} ((y+k)^{q+1}-t^{q+1})^{\beta-1}\Big] s^{p} t^{q}~\mathrm{d}s~\mathrm{d}t.
      \end{aligned}
               \end{equation*}
Considering the transformations $s^{p+1}=u$ and $t^{q+1}=v$ in the above integral   we obtain
\small
\begin{equation*}
\begin{aligned}
      |J_1| \le ~& \frac{M(p+1)^{-\alpha}(q+1)^{-\beta}}{\Gamma (\alpha) \Gamma (\beta)} \int_{a^{p+1}} ^{x^{p+1}}\int_{c^{q+1}} ^{y^{q+1}} \Big[(x^{p+1}-u)^{\alpha-1}(y^{q+1}-v)^{\beta-1}\\~&- ((x+h)^{p+1}-u)^{\alpha-1} ((y+k)^{q+1}-v)^{\beta-1} \Big] ~\mathrm{d}u~\mathrm{d}v\\
=~& \frac{M(p+1)^{-\alpha}(q+1)^{-\beta}}{\Gamma (\alpha) \Gamma (\beta)}   \int_{a^{p+1}} ^{x^{p+1}}\int_{c^{q+1}} ^{y^{q+1}}\Big[(x^{p+1}-u)^{\alpha-1}(y^{q+1}-v)^{\beta-1} \\ ~& - ((x+h)^{p+1}-u)^{\alpha-1} (y^{q+1}-v)^{\beta-1} + ((x+h)^{p+1}-u)^{\alpha-1} (y^{q+1}-v)^{\beta-1}\\&~- ((x+h)^{p+1}-u)^{\alpha-1} ((y+k)^{q+1}-v)^{\beta-1} \Big] ~\mathrm{d}u~\mathrm{d}v\\
=~& \frac{M(p+1)^{-\alpha}(q+1)^{-\beta}}{\Gamma (\alpha) \Gamma (\beta)} \Bigg[ \int_{a^{p+1}} ^{x^{p+1}}\int_{c^{q+1}} ^{y^{q+1}}(y^{q+1}-v)^{\beta-1}\Big[(x^{p+1}-u)^{\alpha-1} \\ & ~ - ((x+h)^{p+1}-u)^{\alpha-1}\Big]~\mathrm{d}u~\mathrm{d}v  + \int_{a^{p+1}} ^{x^{p+1}}\int_{c^{q+1}} ^{y^{q+1}}
((x+h)^{p+1}-u)^{\alpha-1} \\&~ .\Big[  (y^{q+1}-v)^{\beta-1}- ((y+k)^{q+1}-v)^{\beta-1}\Big] ~\mathrm{d}u~\mathrm{d}v   \Bigg].
%=~ & \frac{M}{(p+1)^{\alpha} (q+1)^{\beta}\Gamma (\alpha+1)\Gamma (\beta+1)}  ~~\Bigg[\Big[\big((x+h)^{p+1}-x^{p+1}\big)^{\alpha}-\big((x+h)^{p+1}-a^{p+1}\big)^{\alpha}\Big]\\ & \Big[\big((y+k)^{q+1}-y^{q+1}\big)^{\beta}-\big((y+k)^{q+1}-c^{q+1}\big)^{\beta}\Big]\\&+(x^{p+1}-a^{p+1})^{\alpha}(y^{q+1}-c^{q+1})^{\beta}\Bigg].
\end{aligned}
               \end{equation*}
               \normalsize
  Using the Bernoulli's inequality that reads $(1+x)^r \le 1+rx$ for $0 \le r\le 1$ and $x \ge-1$   we get
\begin{equation*}
\begin{aligned}
  K_1= &~   \int_{a^{p+1}} ^{x^{p+1}}(x^{p+1}-u)^{\alpha-1}- ((x+h)^{p+1}-u)^{\alpha-1}~\mathrm{d}u \\
  = &~ \frac{1}{\alpha} \Big [  \big((x+h)^{p+1} -x^{p+1}\big)^\alpha - \big((x+h)^{p+1} -a^{p+1}\big)^\alpha + (x^{p+1}-a^{p+1})^\alpha  \Big] \\
   \le &~ \frac{(p+1)^\alpha a^{\alpha p} h^\alpha} {\alpha}.
\end{aligned}
               \end{equation*}
               Similarly, we have

    \begin{equation*}
\begin{aligned}
  K_2= &~   \int_{c^{q+1}} ^{y^{q+1}}(y^{q+1}-v)^{\beta-1}- ((y+k)^{q+1}-v)^{\beta-1} ~\mathrm{d}v\\
   \le &~ \frac{(q+1)^\beta c^{\beta q} k^\beta} {\beta}.
\end{aligned}
               \end{equation*}
 Using the estimate for $K_1$ and $K_2$, $|J_1|$ may be bounded as follows:
 \begin{equation*}
\begin{aligned}
      |J_1| \le ~& \frac{M(p+1)^{-\alpha}(q+1)^{-\beta}}{\Gamma (\alpha) \Gamma (\beta)}\Bigg[\frac{(p+1)^\alpha a^{\alpha p} h^\alpha} {\alpha} \int_{c^{q+1}} ^{y^{q+1}} (y^{q+1}-v)^{\beta-1}~\mathrm{d}v \\& ~+  \frac{(q+1)^\beta c^{\beta q} k^\beta} {\beta} \int_{a^{p+1}} ^{x^{p+1}} ((x+h)^{p+1}-u)^{\alpha-1} ~\mathrm{d}u \Bigg] \\
      \le &~   \frac{M(p+1)^{-\alpha}(q+1)^{-\beta}}{\Gamma (\alpha) \Gamma (\beta)}\Bigg[\frac{(p+1)^\alpha a^{\alpha p} h^\alpha} {\alpha \beta} (d^{q+1}-c^{q+1})^\beta \\ &~+ \frac{(q+1)^\beta c^{\beta q} k^\beta} {\alpha \beta} (b^{p+1} -a^{p+1})^\alpha\Bigg]
        \end{aligned}
               \end{equation*}
Therefore, for a suitable constant $C_1$ we have
$$|J_1| \le C_1 (h^\alpha + k^\beta).$$
Similarly, with suitable constants $C_2$, $C_3$ and $C_4$, a now
familiar argument lead to  the following.

     $$ |J_2| \le  C_2 k^\beta,\quad
      |J_3| \le  C_3 h^\alpha,\quad
      |J_4| \le C_4 h^\alpha k^\beta.$$
      Consequently,  we see that for sufficiently small positive constants $h,k
      $, $\gamma:=\min\{\alpha,\beta\}$  and a suitable constant $C$
\begin{equation*}
\begin{aligned}
     \Big|  \big(^{(p,q)}_{(a,c)} \mathfrak{I}^{(\alpha, \beta)}f \big)    (x+h,y+k)- ~\big(^{(p,q)}_{(a,c)} \mathfrak{I}^{(\alpha, \beta)}f \big) (x,y)\Big| \le &~ |J_1|+
 |J_2|+|J_3| +|J_4| \\ \le& ~ C (h^{\alpha}+k^{\beta})\\
  \le&~ C \Big(h^ \gamma +k^\gamma \Big)\\
  \le &~ C \| (x+h,y+k) -(x,y)\|_2^\gamma.
\end{aligned}
               \end{equation*}
 In view of Corollary \ref{BVK_holder_dim} the above estimate completes the proof.
\end{proof}

We prove the semigroup property of the mixed Katugampola integral introduced in the present note. This may be treated as a bivariate counterpart of \textup{\cite[Theorem 4.1]{Katu}}.
\begin{theorem}\label{semigroup} For an integrable function $f:[a,b]\times [c,d] \rightarrow \mathbb{R}$ for which the mixed Katugampola fractional integral  $^{(p,q)}_{(a,c)} \mathfrak{I}^{(\alpha, \beta)}f $ exists, the semigroup property holds. That is,

$$^{(p,q)}_{(a,c)} \mathfrak{I}^{(\alpha, \beta)}~ ^{(p,q)}_{(a,c)} \mathfrak{I}^{(\gamma, \theta)} f = ^{(p,q)}_{(a,c)} \mathfrak{I}^{(\alpha+\gamma, \beta+\theta)}f.$$
\end{theorem}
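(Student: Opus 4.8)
The plan is to exploit the separable, tensor-product structure of the mixed Katugampola kernel, which reduces the bivariate identity to the univariate semigroup property proved by Katugampola in \textup{\cite[Theorem 4.1]{Katu}}. The kernel
$$(x^{p+1}-s^{p+1})^{\alpha-1}s^{p}\,(y^{q+1}-t^{q+1})^{\beta-1}t^{q}$$
factors as a product of an $x$--$s$ factor and a $y$--$t$ factor, and the prefactor $(p+1)^{1-\alpha}(q+1)^{1-\beta}/(\Gamma(\alpha)\Gamma(\beta))$ splits accordingly. Hence $^{(p,q)}_{(a,c)}\mathfrak{I}^{(\alpha,\beta)}$ can be written as a composition $\mathcal{A}^{\alpha}\circ\mathcal{B}^{\beta}$, where $\mathcal{A}^{\alpha}$ denotes the univariate operator $^{p}_{a}\mathfrak{I}^{\alpha}$ acting on the first variable alone (with $y,t$ held as parameters) and $\mathcal{B}^{\beta}$ denotes $^{q}_{c}\mathfrak{I}^{\beta}$ acting on the second variable alone. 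Since $\mathcal{A}^{\alpha}$ and $\mathcal{B}^{\beta}$ operate on independent variables, they commute.

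First I would make this factorization precise and justify the interchange of integrations. On $[a,b]\times[c,d]$ with $0<a<b$, $0<c<d$ and $p,q>-1$, for $s<x$ and $t<y$ one has $s^{p+1}<x^{p+1}$ and $t^{q+1}<y^{q+1}$, so every kernel factor is nonnegative; combined with the hypothesis that $f$ is integrable and that the relevant fractional integrals exist, Tonelli's theorem guarantees absolute convergence of the iterated integrals and thereby licenses all the rearrangements needed below.

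Next I would carry out the composition. Writing the left-hand side as $\mathcal{A}^{\alpha}\mathcal{B}^{\beta}\mathcal{A}^{\gamma}\mathcal{B}^{\theta}f$ and using that $\mathcal{B}^{\beta}$ (which touches only the second variable) commutes with $\mathcal{A}^{\gamma}$ (which touches only the first), I can regroup this as $\big(\mathcal{A}^{\alpha}\mathcal{A}^{\gamma}\big)\big(\mathcal{B}^{\beta}\mathcal{B}^{\theta}\big)f$. The univariate semigroup property of \textup{\cite[Theorem 4.1]{Katu}}, applied with base point $a$ and parameter $p$ in the first variable and with base point $c$ and parameter $q$ in the second, then gives $\mathcal{A}^{\alpha}\mathcal{A}^{\gamma}=\mathcal{A}^{\alpha+\gamma}$ and $\mathcal{B}^{\beta}\mathcal{B}^{\theta}=\mathcal{B}^{\beta+\theta}$. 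Recombining the two one-dimensional operators into the mixed operator produces $^{(p,q)}_{(a,c)}\mathfrak{I}^{(\alpha+\gamma,\beta+\theta)}f$, which is the claimed identity.

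Should one prefer a self-contained argument, the same result follows by direct computation: substituting the definition of $^{(p,q)}_{(a,c)}\mathfrak{I}^{(\gamma,\theta)}f$ into the outer operator and applying Fubini to bring the $(s,t)$-integration inside, the inner $s$-integral becomes $\int_{\sigma}^{x}(x^{p+1}-s^{p+1})^{\alpha-1}(s^{p+1}-\sigma^{p+1})^{\gamma-1}s^{p}\,\mathrm{d}s$. The substitution $u=s^{p+1}$ followed by $u=\sigma^{p+1}+w(x^{p+1}-\sigma^{p+1})$ evaluates this to $(p+1)^{-1}(x^{p+1}-\sigma^{p+1})^{\alpha+\gamma-1}B(\gamma,\alpha)$, and an identical calculation with exponents $\beta,\theta$ handles the $t$-integral; the identity $B(\gamma,\alpha)=\Gamma(\alpha)\Gamma(\gamma)/\Gamma(\alpha+\gamma)$ then collapses the Gamma and $(p+1),(q+1)$ prefactors into exactly the constant and kernel of $^{(p,q)}_{(a,c)}\mathfrak{I}^{(\alpha+\gamma,\beta+\theta)}f$. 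The main obstacle here is not conceptual but one of bookkeeping: ensuring the Fubini interchange is legitimate (handled by nonnegativity of the kernel and integrability of $f$ through Tonelli) and keeping the two Beta-function reductions together with the accumulation of $(p+1)^{1-\cdot}$, $(q+1)^{1-\cdot}$ and $\Gamma$-factors straight so that the constants match on both sides.
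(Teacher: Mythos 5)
Your proposal is correct, and your second, ``self-contained'' route is essentially the paper's own proof: the authors substitute the inner operator into the outer one, interchange the order of integration (Fubini plus the ``Dirichlet technique''), and evaluate the resulting inner integral $\int_s^x (x^{p+1}-u^{p+1})^{\alpha-1}(u^{p+1}-s^{p+1})^{\gamma-1}u^p\,\mathrm{d}u$ by a substitution linear in $u^{p+1}$ to obtain $\frac{(x^{p+1}-s^{p+1})^{\alpha+\gamma-1}}{p+1}\cdot\frac{\Gamma(\alpha)\Gamma(\gamma)}{\Gamma(\alpha+\gamma)}$, with the identical computation in the second variable collapsing the constants. Your primary route is genuinely different in organization: you note that the mixed operator is the composition of two commuting univariate Katugampola operators acting on independent variables, so the bivariate semigroup property follows formally from two applications of the univariate result in \cite[Theorem 4.1]{Katu} after regrouping $\mathcal{A}^{\alpha}\mathcal{B}^{\beta}\mathcal{A}^{\gamma}\mathcal{B}^{\theta}$ as $(\mathcal{A}^{\alpha}\mathcal{A}^{\gamma})(\mathcal{B}^{\beta}\mathcal{B}^{\theta})$. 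This buys brevity and makes transparent that the theorem is a tensor-product consequence of the one-variable statement, at the price of leaning on the cited univariate theorem (which must be applied uniformly in the spectator variable) and of an additional Fubini interchange to justify the commutation $\mathcal{B}^{\beta}\mathcal{A}^{\gamma}=\mathcal{A}^{\gamma}\mathcal{B}^{\beta}$; the paper's direct computation is self-contained and exhibits the constants explicitly, but repeats in each variable the Beta-function reduction already done by Katugampola. Both arguments rest on the same justification for rearranging the integrals (nonnegativity of the kernel together with integrability of $f$), and your treatment of that point via Tonelli is at least as careful as the paper's.
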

\begin{proof}
Using the Fubini's theorem and the Dirichlet technique, we obtain
\begin{equation*}
 \begin{aligned}
& \big(^{(p,q)}_{(a,c)} \mathfrak{I}^{(\alpha, \beta)}~ ^{(p,q)}_{(a,c)} \mathfrak{I}^{(\gamma, \theta)} f\big) (x,y) \\& = \frac{(p+1)^{2-\alpha-\gamma}(q+1)^{2-\beta-\theta}}{\Gamma (\alpha) \Gamma (\beta) \Gamma (\gamma) \Gamma (\theta)} \int_a ^x \int_c ^y \Bigg[\int_s ^x \int_t ^y (x^{p+1}-u^{p+1})^{\alpha-1} (u^{p+1}-s^{p+1})^{\gamma-1}\\& .(y^{q+1}-v^{q+1})^{\beta-1} (v^{q+1}-t^{q+1})^{\theta-1}u^{p} v^{q} ~\mathrm{d}u ~ \mathrm{d}v \Bigg]~ s^{p}t^{q}f(s,t)~\mathrm{d}s ~ \mathrm{d}t.
\end{aligned}
\end{equation*}
Using the change of variable $z=\dfrac{u^{p+1}-s^{p+1}}{x^{p+1}-u^{p+1}}$ and the well known formula for beta function we have
\begin{equation*}
 \begin{aligned}
  \int_s ^x  (x^{p+1}-u^{p+1})^{\alpha-1} (u^{p+1}-s^{p+1})^{\gamma-1}u^{p}  ~\mathrm{d}u=& \frac{(x^{p+1}-s^{p+1})^{\alpha+\gamma-1}}{p+1} \int_0^1 (1-z)^{\alpha-1} z^{\gamma-1} ~\mathrm{d}z\\=& \frac{(x^{p+1}-s^{p+1})^{\alpha+\gamma-1}}{p+1} \frac{\Gamma (\alpha)\Gamma (\gamma)}{\Gamma (\alpha+\gamma)}.
  \end{aligned}
\end{equation*}
Consequently
\begin{equation*}
 \begin{aligned}
\big(^{(p,q)}_{(a,c)} \mathfrak{I}^{(\alpha, \beta)}~ ^{(p,q)}_{(a,c)} \mathfrak{I}^{(\gamma, \theta)} f\big) (x,y) =& \frac{(p+1)^{1-(\alpha+\gamma)}(q+1)^{1-(\beta+\theta)}}{\Gamma (\alpha+\gamma) \Gamma (\beta+\theta) } \int_a ^x \int_c ^y  (x^{p+1}-s^{p+1})^{\alpha+\gamma-1}\\&.  (y^{q+1}-t^{q+1})^{\beta+\theta-1} s^{p}t^{q}f(s,t)~\mathrm{d}s ~ \mathrm{d}t\\ =&  \big(^{(p,q)}_{(a,c)} \mathfrak{I}^{(\alpha+\gamma, \beta+\theta)}f\big)~(x,y),
\end{aligned}
\end{equation*}
and the proof is complete.
\end{proof}
 With the help of the relation between the Hausdorff dimension and the upper box dimension, Theorem \ref{Holder_bivariate} and Theorem \ref{semigroup},
 one can easily prove:
\begin{theorem}
Let $f: [a,b] \times [c,d] \to \mathbb{R}$ be continuous,  $ 0 < a < b < \infty $, $0<c<d<\infty$ and $-1< p,q \le 0.$
\begin{enumerate}
\item  If $ 0< \alpha, \beta < 1,$ then $$ 2 \le  \dim_H \Big(G({^{(p,q)}_{(a,c)} \mathfrak{I}^{(\alpha, \beta)}f}) \Big)
 \le  \dim_B \Big(G({^{(p,q)}_{(a,c)} \mathfrak{I}^{(\alpha, \beta)}f}) \Big)   \le 3- \min\{\alpha,\beta\}.$$
 \item If $  \alpha , \beta \ge 1,$ then $$  \dim_H \Big(G({^{(p,q)}_{(a,c)} \mathfrak{I}^{(\alpha, \beta)}f}) \Big)
 =  \dim_B \Big(G({^{(p,q)}_{(a,c)} \mathfrak{I}^{(\alpha, \beta)}f}) \Big) =2 .$$
\end{enumerate}
\end{theorem}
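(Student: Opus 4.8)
The plan is to obtain both statements from three ingredients already at our disposal: the continuity of the integral operator (Theorem \ref{BVK_Conti}), the elementary chain $\dim_H \le \underline{\dim}_B \le \overline{\dim}_B$ together with the fact that a continuous function on a two-dimensional domain has graph of Hausdorff dimension at least $2$, and the H\"older-type dimension estimate of Theorem \ref{Holder_bivariate}, the latter being fed by the semigroup property (Theorem \ref{semigroup}) in the second part. Throughout write $g := {}^{(p,q)}_{(a,c)} \mathfrak{I}^{(\alpha, \beta)} f$. By Theorem \ref{BVK_Conti} the function $g$ is continuous on $[a,b]\times[c,d]$, so the lemma of Falconer recalled above yields $\dim_H(G(g)) \ge 2$, and hence
\begin{equation*}
2 \le \dim_H(G(g)) \le \underline{\dim}_B(G(g)) \le \overline{\dim}_B(G(g))
\end{equation*}
in both cases. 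It therefore remains only to bound $\overline{\dim}_B(G(g))$ from above.

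For statement (1), the hypotheses $0<a<b<\infty$, $0<c<d<\infty$, $-1<p,q\le 0$ and $0<\alpha,\beta<1$ are exactly those of Theorem \ref{Holder_bivariate}. Applying that theorem to the continuous function $f$ gives $\overline{\dim}_B(G(g)) \le 3-\min\{\alpha,\beta\}$, which combined with the displayed chain proves the first assertion at once.

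For statement (2), where $\alpha,\beta\ge 1$, Theorem \ref{Holder_bivariate} does not apply directly, and this is where the semigroup property does the work. Fix any pair $\alpha',\beta'\in(0,1)$; since $\alpha\ge 1>\alpha'$ and $\beta\ge 1>\beta'$ we have $\alpha-\alpha'>0$ and $\beta-\beta'>0$, so by Theorem \ref{BVK_Conti} the function $h := {}^{(p,q)}_{(a,c)} \mathfrak{I}^{(\alpha-\alpha', \beta-\beta')} f$ is continuous, and by the semigroup property (Theorem \ref{semigroup})
\begin{equation*}
g = {}^{(p,q)}_{(a,c)} \mathfrak{I}^{(\alpha, \beta)} f = {}^{(p,q)}_{(a,c)} \mathfrak{I}^{(\alpha', \beta')}\,{}^{(p,q)}_{(a,c)} \mathfrak{I}^{(\alpha-\alpha', \beta-\beta')} f = {}^{(p,q)}_{(a,c)} \mathfrak{I}^{(\alpha', \beta')} h.
\end{equation*}
Now $h$ is continuous and $0<\alpha',\beta'<1$, so Theorem \ref{Holder_bivariate} applies to $g$, viewed as the mixed Katugampola integral of order $(\alpha',\beta')$ of $h$, giving $\overline{\dim}_B(G(g)) \le 3-\min\{\alpha',\beta'\}$. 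The crucial point is that $g$ itself is \emph{independent} of the splitting parameters $\alpha',\beta'$, whereas the right-hand side can be made arbitrarily close to $2$ by letting $\alpha',\beta'\to 1^-$; consequently $\overline{\dim}_B(G(g)) \le 2$. Together with the lower bound this forces all four quantities to equal $2$, proving (2).

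The only genuinely delicate step is this last decoupling/limiting argument in (2): one must check that the semigroup hypotheses and the continuity of the inner integral $h$ hold for \emph{every} admissible choice $(\alpha',\beta')\in(0,1)^2$, so that the single fixed function $g$ inherits the bound $3-\min\{\alpha',\beta'\}$ for all such choices, and then pass to the supremum over these choices. Both verifications are routine given Theorems \ref{BVK_Conti}, \ref{Holder_bivariate} and \ref{semigroup}, and no further computation is needed. (Alternatively, when $\alpha,\beta>1$ one may peel off an order-$(1,1)$ integral via the semigroup and observe that ${}^{(p,q)}_{(a,c)} \mathfrak{I}^{(1,1)}$ of a bounded continuous function is an iterated integral with bounded first-order partials, hence Lipschitz, so that Corollary \ref{BVK_holder_dim}(1) with $s=1$ gives $\overline{\dim}_B(G(g))\le 2$ directly; the limiting argument above has the advantage of covering the boundary values $\alpha=1$ or $\beta=1$ uniformly.)
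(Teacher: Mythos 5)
Your proposal is correct and follows exactly the route the paper intends: the lower bound from continuity of $^{(p,q)}_{(a,c)} \mathfrak{I}^{(\alpha,\beta)}f$ (Theorem \ref{BVK_Conti}) together with the chain $\dim_H \le \underline{\dim}_B \le \overline{\dim}_B$, the H\"older estimate of Theorem \ref{Holder_bivariate} for part (1), and the semigroup property (Theorem \ref{semigroup}) with the limiting argument $\alpha',\beta'\to 1^-$ for part (2). The paper leaves these details to the reader, citing precisely these three ingredients, and your write-up supplies them correctly.
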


\section{Some Examples}
The fact  that the fractal dimension of the graph of a bivariate continuous function of bounded variation (in the sense of Arzel\'{a}) is $2$ may motivate to ask whether there exists a bivariate continuous function which is not of bounded variation with box dimension of its graph being $2$. It is to this that we now turn. A  univariate counterpart of the construction herein can be consulted in \cite{SV}.
 \par
 Given $[a,b] \times [c,d]$, consider a monotonically  increasing sequence of real numbers $(a_n)$ in $[a,b]$ such that $\lim_{n\to \infty} a_n=b$. For example, we consider a sequence $(a_n)_{n \ge 0}$ by defining $a_0=a$ and  $a_n=a+\frac{b-a}{2}+\dots+\frac{b-a}{2^n}$, $n \in \mathbb{N}$.
Let $\phi: [a_0,a_1] \times [c,d]$ be a continuous function  such that $$\phi(a_0,y)=\phi(a_1,y)~ \forall~ y \in [c,d].$$ Let us call $\phi$ as the generating function.
Let $\psi_n$ be map from $[a_{n-1},a_n]$ onto $[a_0,a_1]$ defined by
$$\psi_n(x) = \frac{2^n \big[(a_1-a_0)x+a_0a_n-a_1 a_{n-1} \big] }{b-a}.$$
Let $F_1(x,y)=\phi(x,y)$ for $(x,y) \in [a_0,a_1] \times [c,d]$ and for $n \ge 2$,  $$F_n(x,y)=\frac{1}{n}\phi\big( \psi_n(x),y\big)+\frac{n-1}{n} \phi(a_0,y)~ \text{for}~ x \in [a_{n-1},a_n] \times [c,d]. $$
 Consider a sequence of functions $(T_n)_{n \in \mathbb{N}}$ on $[a,b]$ as follows. For each $n \in \mathbb{N},$
 \begin{equation*}
 T_n(x,y)= \begin{cases}
   F_k (x,y) ~\text{for}~(x,y) \in [a_{k-1},a_k]\times [c,d], ~k=1,2,\dots,n\\
      F_n(a_n,y) ~\text{for}~(x,y) \in [a_n,b]\times [c,d].
 \end{cases}
 \end{equation*}
Let   $$T(x,y)=\lim_{n\rightarrow \infty} T_n(x,y).$$
The following theorem is obtained by modifying and adapting \textup{\cite[Theorem 4.1]{SV}} to the present setting of bivariate function.
\begin{theorem}\label{constructhm1}
If the generating function  $\phi$ is a non-constant function along the line $y=y_0$ for some $y_0 \in [c,d],$ then the function $T$ is not of bounded variation on $[a,b] \times [c,d].$
\end{theorem}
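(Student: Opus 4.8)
The plan is to produce a single one-parameter family of admissible Arzel\'a partitions whose variation sums diverge, by harvesting the oscillation of $T$ purely in the first variable. The mechanism is that on each block $[a_{n-1},a_n]\times[c,d]$ the limit function coincides with $F_n$ (since $T_m=F_n$ there for every $m\ge n$), and the oscillation of $F_n$ in its first argument is a fixed positive quantity scaled by $1/n$; summing these scaled oscillations over $n$ yields a harmonic divergence. First I would record the hypothesis quantitatively: as $\phi(\cdot,y^{*})$ is non-constant on $[a_0,a_1]$ for some level $y^{*}\in[c,d]$ (I rename the level $y^{*}$ to avoid a clash with the partition's initial ordinate, which must equal $c$), there exist $x'<x''$ in $[a_0,a_1]$ with $\eta:=\bigl|\phi(x'',y^{*})-\phi(x',y^{*})\bigr|>0$.

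Next I would transport $x',x''$ into each block. Since $\psi_n$ is an increasing affine bijection of $[a_{n-1},a_n]$ onto $[a_0,a_1]$ (one checks directly that $\psi_n(a_{n-1})=a_0$ and $\psi_n(a_n)=a_1$), putting $x'_n:=\psi_n^{-1}(x')$ and $x''_n:=\psi_n^{-1}(x'')$ gives $a_{n-1}\le x'_n<x''_n\le a_n$. Using $T=F_n$ on $[a_{n-1},a_n]\times[c,d]$ and the cancellation of the affine term $\tfrac{n-1}{n}\phi(a_0,y^{*})$, one obtains the single identity on which everything rests:
\[
\bigl|T(x''_n,y^{*})-T(x'_n,y^{*})\bigr|=\frac{1}{n}\bigl|\phi(x'',y^{*})-\phi(x',y^{*})\bigr|=\frac{\eta}{n},
\]
which for $n=1$ reduces to the oscillation of $\phi$ itself.

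Then, for each $N$, I would assemble the diagonal partition with ordered vertices
\[
(a,c),\,(x'_1,y^{*}),\,(x''_1,y^{*}),\,(x'_2,y^{*}),\,(x''_2,y^{*}),\,\ldots,\,(x'_N,y^{*}),\,(x''_N,y^{*}),\,(b,d).
\]
The abscissae are weakly increasing because the blocks $[a_{n-1},a_n]$ are consecutive and $x'_n<x''_n$; the ordinates are weakly increasing because they equal $y^{*}$ throughout the middle (the Arzel\'a definition permits equalities) and rise only once from $c$ to $y^{*}$ at the start and once from $y^{*}$ to $d$ at the end, which is legitimate since $c\le y^{*}\le d$. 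Discarding every term of the Arzel\'a sum except the horizontal ones (all discarded terms being non-negative), one gets the lower bound
\[
\sum_{i}\bigl|\Delta T\bigr|\;\ge\;\sum_{n=1}^{N}\bigl|T(x''_n,y^{*})-T(x'_n,y^{*})\bigr|\;=\;\eta\sum_{n=1}^{N}\frac{1}{n}.
\]
As the harmonic series diverges, letting $N\to\infty$ shows these sums are unbounded, so $T$ fails to be of bounded variation in the sense of Arzel\'a.

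The only genuinely delicate point is reconciling the \emph{two} monotonicity constraints of the Arzel\'a definition with the wish to extract oscillation solely in the $x$-direction, and I expect this to be the crux of the argument. The resolution is exactly that the definition is phrased with weak inequalities: the ordinate can be frozen at the value $y^{*}$ throughout the oscillatory portion of the path, while both vertical transitions are relegated to the two endpoints, where they contribute only non-negative amounts that can do no harm to the lower bound.
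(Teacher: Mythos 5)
Your proof is correct, and its engine is the same as the paper's: pull the fixed oscillation $\eta$ of $\phi(\cdot,y^{*})$ back into the $n$-th block via $\psi_n^{-1}$, observe that the affine term $\tfrac{n-1}{n}\phi(a_0,\cdot)$ cancels so that $T$ oscillates by exactly $\eta/n$ there, and let the harmonic series do the rest. Where you diverge is in how the harmonic divergence is converted into failure of Arzel\'a bounded variation. The paper first concludes that the univariate restriction $T(\cdot,y_0)$ has unbounded variation on $[a,b]$, and then argues indirectly: if $T$ were of bounded variation in Arzel\'a's sense, the decomposition $T=f_1-f_2$ with $\Delta_{10}f_i\ge 0$, $\Delta_{01}f_i\ge 0$ (Theorem \ref{BVK_Arzela}) would exhibit $T(\cdot,y_0)$ as a difference of bounded increasing functions, hence univariate BV --- a contradiction. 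You instead work straight from the definition, exhibiting for each $N$ an admissible point set $(a,c),(x'_1,y^{*}),(x''_1,y^{*}),\dots,(x'_N,y^{*}),(x''_N,y^{*}),(b,d)$ with weakly increasing abscissae and ordinates whose Arzel\'a sum is at least $\eta\sum_{n=1}^{N}1/n$. Your observation that the weak inequalities in the definition let you freeze the ordinate at $y^{*}$ and relegate the two vertical moves to the endpoints is exactly the point that makes this direct route legitimate, and it is handled correctly. What your version buys is self-containedness: it needs no characterization theorem, only the definition. What the paper's version buys is a reusable intermediate fact (unboundedness of the variation of a single horizontal restriction suffices), at the cost of invoking the Arzel\'a--Jordan decomposition. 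Both are sound.
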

\begin{proof}
Since $\phi$ is a non-constant function along the line $y=y_0$ for some $y_0 \in [c,d],$  we have $$|\phi(t,y_0)-\phi(u,y_0)| \ge c,$$  for some $t,u \in [a_0,a_1] $ with $t \le u, $ and $c>0$. Choose $ t_1, t_2 \in [a_0,a_1]$, $t_1 <t_2$ such that $$|F_1(t_1,y_0)-F_1(t_2,y_0)|=|\phi(t,y_0)-\phi(u,y_0)| \ge c.$$ We can choose
$ t_3, t_4 \in [a_1,a_2]$ such that $t_3 < t_4$ and $$|F_2(t_3,y_0)-F_2(t_4,y_0)|= \frac{1}{2} |\phi(t,y_0)-\phi(u,y_0)| \ge \frac{c}{2}.$$ Using the linear maps we see that for $i >1$,  $t_i= \psi_{i-1}^{-1}(t)$ and $t_{i+1}= \psi_{i-1}^{-1}(u)$. Continuing like this, we obtain a collection $P'=\{t_i:t_1 < t_2< t_3 < \dots < t_{2n} \}.$ We take a  partition $P$ of $[a,b],$ which contains $P'.$ The variation of $T$ (treated as a univariate function) along the line $y=y_0$ denoted by $V(T,[a,b],y_0)$ is $$ V(T,[a,b],y_0) \ge \sum_{i=1}^{2n}|T(t_{i+1},y_0)-T(t_i,y_0)| \ge \sum_{i=1}^{n}|F_i (t_{i+1},y_0)-F_i (t_i,y_0)| \ge \sum_{i=1}^{n} \frac{c}{i}.$$ Since $\sum_{i=1}^{\infty} \frac{1}{i} = \infty$ and $c >0,$ restriction of $T$ along the line $y=y_0$ is not of bounded variation on $[a,b].$ From a well-known property of a univariate function of bounded variation on $[a,b]$, we obtain that the restriction of $T$, $T|_{y=y_0},$ along the line $y=y_0$ can not be written as difference of two increasing functions $f_1,f_2:[a,b] \rightarrow \mathbb{R}.$ That is, $T|_{y=y_0}= f_1 -f_2$ with $\triangle_{10}f_i(x,y_0) \ge 0 , i=1,2$ does not hold. This clearly indicates that $T$ will not be of bounded variation on $[a,b] \times [c,d]$ in the sense of Arzel\'a.
\end{proof}

\begin{theorem}\label{constructhm2}
If $ \phi $ is of bounded variation in the sense of Arzel\'a, then
the box dimension of its graph $G(T)$ is $2.$
\end{theorem}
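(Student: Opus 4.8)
The plan is to prove $\overline{\dim}_B(G(T))\le 2$, since the matching lower bound will come for free: $T$ is the uniform limit of the continuous functions $T_n$. Indeed, on each strip $S_n:=[a_{n-1},a_n]\times[c,d]$ one has $T=F_n=T_n$, while for $x\ge a_n$ the two functions differ by at most $\tfrac1n R_\phi$, where $R_\phi$ denotes the maximum range of $\phi$ on $[a_0,a_1]\times[c,d]$ (this uses $F_m(x,y)-\phi(a_0,y)=\tfrac1m\big[\phi(\psi_m(x),y)-\phi(a_0,y)\big]$ and $\phi(a_1,y)=\phi(a_0,y)$, which also makes every $T_n$ continuous). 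Hence $\|T_n-T\|_\infty\le R_\phi/n\to 0$, so $T$ is continuous, and by the graph-dimension lemma of \cite{Fal} we get $\dim_H(G(T))\ge 2$; together with $\dim_H\le\underline{\dim}_B\le\overline{\dim}_B$ this secures the lower bound.

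For the upper bound I would argue directly via Lemma \ref{Falbiv}, namely $N_\delta(G(T))\le 2mn+\tfrac1\delta\sum_{i,j}R_T[A_{ij}]$. The reason a direct count is \emph{forced} here is that $G(T)$ is a countable union of graphs over the strips $S_n$, and upper box dimension is only finitely stable; the finitely many wide strips behave like $\phi$ (dimension $2$ by Lemma \ref{BVK_main}), but the infinitely many strips accumulating at $x=b$ must be controlled by hand. Since $mn=O(\delta^{-2})$, it suffices to show $\sum_{i,j}R_T[A_{ij}]=O\!\big(\delta^{-1}\log\log(1/\delta)\big)$; any subpolynomial overhead beyond $\delta^{-1}$ disappears in the $\limsup$. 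Given small $\delta$, set $N=\lfloor\log_2\tfrac{b-a}{\delta}\rfloor$, so that $S_1,\dots,S_N$ have width $\ge\delta$ while the tail $[a_N,b]$ has width $b-a_N=(b-a)2^{-N}\le 2\delta$ and thus meets only $O(1)$ columns of $\delta$-squares.

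On a wide strip $S_n$ I would bound the oscillation sum of $F_n=\tfrac1n\phi(\psi_n(\cdot),\cdot)+\tfrac{n-1}{n}\phi(a_0,\cdot)$ in two parts. Because $\psi_n$ stretches the $x$-axis by the factor $2^{n-1}$, a $\delta$-square maps to a $2^{n-1}\delta\times\delta$ rectangle in $[a_0,a_1]\times[c,d]$; by subadditivity of oscillation along the chain of $\delta$-squares tiling that rectangle, together with the bounded-variation estimate $\sum_{\delta\text{-squares}}R_\phi[\cdot]\le C_\phi/\delta$ already exploited in Lemma \ref{BVK_main}, the first term contributes at most $C_\phi/(n\delta)$. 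The second term depends only on $y$; since $\phi(a_0,\cdot)$ is a difference of the (monotone, hence increasing-in-$y$) components of the Arzel\'a decomposition of $\phi$ (Theorem \ref{BVK_Arzela}), it is of bounded variation with total variation $V_0$, and summing its $y$-oscillations over the $\approx\frac{(b-a)(d-c)}{2^n\delta^2}$ squares of $S_n$ costs at most $(b-a)V_0/(2^n\delta)$. Summing over $n=1,\dots,N$ gives $\tfrac{C_\phi}{\delta}\sum_{n\le N}\tfrac1n+\tfrac{(b-a)V_0}{\delta}\sum_{n\le N}2^{-n}=O\!\big(\tfrac{\log\log(1/\delta)}{\delta}\big)$, the harmonic sum being $O(\log N)=O(\log\log(1/\delta))$.

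On the tail every piece $F_n$ with $n>N$ satisfies $|F_n-\phi(a_0,\cdot)|\le R_\phi/N$, so on each of the $O((d-c)/\delta)$ squares meeting the $O(1)$ tail columns the oscillation of $T$ is at most $2R_\phi/N+R_{\phi(a_0,\cdot)}[\cdot]$; summing yields $O(1/(N\delta))+V_0=O(\delta^{-1})$. Adding the two contributions gives $\sum_{i,j}R_T[A_{ij}]=O\!\big(\tfrac{\log\log(1/\delta)}{\delta}\big)$, hence $N_\delta(G(T))=O\!\big(\tfrac{\log\log(1/\delta)}{\delta^2}\big)$ and $\overline{\dim}_B(G(T))\le 2$, completing the proof. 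I expect the tail estimate to be the main obstacle: it is exactly the countable accumulation of strips at $x=b$ that could a priori raise the dimension, and the argument survives only because both the width ($\sim 2^{-n}$) and the amplitude ($\sim 1/n$) of $F_n$ decay, so the tail behaves like a single bounded-variation contribution; the secondary technical point is matching the anisotropic image of a $\delta$-square under $\psi_n$ to the isotropic oscillation bound for $\phi$.
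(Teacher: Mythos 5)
Your proof is correct and takes essentially the same route as the paper: the lower bound $\dim_H(G(T))\ge 2$ comes from continuity of $T$, and the upper bound comes from Lemma \ref{Falbiv} with the oscillation sum over the strips controlled by the bounded variation of $\phi$ together with the harmonic decay $1/n$ of the amplitudes, so that $N_\delta(G(T))$ exceeds $\delta^{-2}$ only by a logarithmic factor that cannot affect the box dimension. The paper compresses all of this into a one-line appeal to the finiteness of the surface area $L$ of $z=\phi(x,y)$ and the bound $N_\delta(G(T))\le 2mn+L\,\delta^{-2}\sum_{i\le \lceil m/(b-a)\rceil}1/i$; your version is a more careful rendering of the same argument, supplying details the paper leaves implicit (the uniform convergence $T_n\to T$, the stretching of $\delta$-squares under $\psi_n$, and the separate treatment of the tail strips accumulating at $x=b$).
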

\begin{proof}
Since the function $\phi$ is of bounded variation, surface area of $z= \phi(x,y)$ is finite say $L;$ see \cite{James}. We have
 $$ 2 \le \dim_H(G(T)) \le \underline{\dim}_B(G(T)) \le  \overline{\dim}_B(G(T)).$$ Consequently, it is enough to show that $\overline{\dim}_B(G_T) \le 2.$ Let $ 0< \delta < \min\{b-a,d-c\},$ $m$ and $n$ be the natural numbers such that $$\frac{b-a}{ \delta} \le m < 1+ \frac{b-a}{ \delta}, \quad \frac{d-c}{ \delta} \le n < 1+ \frac{d-c}{ \delta}.$$ Denote by $N_{\delta}(G(T))$  the number of squares of the $\delta-$mesh that intersect $G(T). $ By Lemma \ref{Falbiv} $$N_{\delta}(G(T)) \le 2mn+L. \delta^{-2} \sum_{i=1}^{ \left \lceil {\frac{m}{b-a}}\right \rceil} \frac{1}{i},$$ where $\left \lceil {.}\right \rceil $ denotes the ceiling function. Using the inequality $ \sum_{i=1}^{n} \frac{1}{i} \le \log n +1 $, we obtain
 $$  \frac{\log N_{\delta}(G(T))}{- \log \delta} \le \frac{\log\Big(2mn+L. \delta^{-2}\big( \log (\left \lceil {\frac{m}{b-a}}\right \rceil )+1 \big) \Big)}{- \log \delta}.$$
 This gives, $ \overline{\lim}_{m\rightarrow \infty}\frac{\log N_{\delta}(G(T))}{- \log \delta} \le 2$ and consequently, $\overline{\dim}_B(G(T)) \le 2$ providing the assertion.
\end{proof}
\begin{theorem}
If $ \phi $ is of bounded variation in the sense of Arzel\'a, then
the Hausdorff dimension of $G(T)$ is $2.$
\end{theorem}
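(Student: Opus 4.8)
The plan is to determine $\dim_H(G(T))$ by squeezing it between $2$ and $2$: the continuity of $T$ supplies the lower bound $\dim_H(G(T))\ge 2$, while the box-counting estimate already proved in Theorem \ref{constructhm2} supplies the upper bound. The point is that, once the upper box dimension of $G(T)$ has been controlled, the Hausdorff dimension requires no further analytic work, because for any set one always has $\dim_H\le\overline{\dim}_B$.

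First I would confirm that $T$ is continuous on $[a,b]\times[c,d]$, since this is what the lower estimate rests on. On each closed subrectangle $[a_{k-1},a_k]\times[c,d]$ the limit function equals $F_k$, which is continuous because it is assembled from the continuous generating function $\phi$ and the affine rescaling $\psi_k$; the normalization condition $\phi(a_0,y)=\phi(a_1,y)$ forces adjacent pieces to agree along the vertical lines $x=a_k$, so no jumps occur at the junction abscissae. Continuity at the accumulation line $x=b$ is the only genuinely delicate point: here one uses that the factor $\tfrac1k$ in $F_k(x,y)=\tfrac1k\phi(\psi_k(x),y)+\tfrac{k-1}{k}\phi(a_0,y)$ damps the oscillation, so $F_k(\cdot,y)\to\phi(a_0,y)$ uniformly in $y$ as $k\to\infty$ and $T(b,y)=\phi(a_0,y)$. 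Granting continuity, the Falconer-type lemma recalled in Section 2 (for continuous $f$ on $A\subseteq\mathbb{R}^n$ one has $\dim_H(G(f))\ge\dim_H(A)$), applied with $A=[a,b]\times[c,d]$, gives
\[
\dim_H(G(T))\ge \dim_H\big([a,b]\times[c,d]\big)=2.
\]

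For the matching upper bound I would simply invoke the universal inequality $\dim_H\le\overline{\dim}_B$ together with Theorem \ref{constructhm2}, which already established $\overline{\dim}_B(G(T))\le 2$ under the hypothesis that $\phi$ is of bounded variation in Arzel\'a's sense. This yields $\dim_H(G(T))\le\overline{\dim}_B(G(T))\le 2$, and combining the two estimates gives $\dim_H(G(T))=2$, as claimed. The only step demanding real care is the continuity of $T$ at $x=b$ and at the junction lines $x=a_n$; every other ingredient is already in hand, and indeed the present statement is essentially contained in the chain of inequalities written down in the proof of Theorem \ref{constructhm2}.
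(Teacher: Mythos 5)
Your argument is correct, but it is not the route the paper takes. You obtain the upper bound from the universal inequality $\dim_H(G(T))\le\overline{\dim}_B(G(T))$ together with the estimate $\overline{\dim}_B(G(T))\le 2$ already established in Theorem \ref{constructhm2}, and the lower bound from continuity of $T$ via the Falconer-type lemma; as you note, this chain of inequalities is in fact already written out verbatim at the start of the proof of Theorem \ref{constructhm2}, so the Hausdorff statement drops out with no new work. The paper instead argues via countable stability of Hausdorff dimension: each piece $F_n$ is continuous and of bounded variation in Arzel\'a's sense (being an affine reparametrization of $\tfrac1n\phi$ plus a bounded-variation function of $y$), so $\dim_H(G(F_n))=2$ by Lemma \ref{BVK_main}, and since $G(T)$ is a countable union of the sets $G(F_n)$ (together with a negligible piece over the line $x=b$), countable stability gives $\dim_H(G(T))=\sup_n\dim_H(G(F_n))=2$. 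The trade-off is clear: your proof is more economical because it recycles the box-counting estimate and avoids checking anything about the individual $F_n$, whereas the paper's proof is logically independent of Theorem \ref{constructhm2} and would survive even if the global box-counting bound failed, since it only needs the dimension of each piece. Your attention to the continuity of $T$ at the junction lines $x=a_n$ (via $\phi(a_0,y)=\phi(a_1,y)$) and at the accumulation line $x=b$ (via the damping factor $\tfrac1k$) is a point the paper leaves implicit, and it is genuinely needed for the lower bound in either approach.
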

\begin{proof}
By hypotheses that function $\phi$ is of bounded variation in the sense of Arzel\'a, we have $\dim_H \big(G(\phi)\big)=2 .$
Note that $\dim_H \big(G(F_n)\big)=2 ~ ~\forall~ n \in \mathbb{N}.$
By the countable additivity of Hausdorff dimension, the result follows.
\end{proof}

\begin{theorem}{(Theorem 16, \cite{Raymond})}
If $f:[a,b]\times [c,d] \rightarrow \mathbb{R}$ is of bounded variation (in the sense of Arzel\'a), then $f$ is totally differentiable almost everywhere.
\end{theorem}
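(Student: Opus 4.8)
The plan is to exploit the structural decomposition furnished by Theorem \ref{BVK_Arzela} and then reduce the problem to a differentiation statement for a single monotone function. First I would note that total differentiability is preserved under differences: if $f_1$ and $f_2$ are each totally differentiable at a point $(x_0,y_0)$, then so is $f=f_1-f_2$. Hence, writing $f=f_1-f_2$ with $f_1,f_2$ bounded and monotone (that is, $\Delta_{10}f_i\ge 0$ and $\Delta_{01}f_i\ge 0$) as guaranteed by Theorem \ref{BVK_Arzela}, it suffices to prove that a single monotone function is totally differentiable almost everywhere; the exceptional sets $N_1,N_2$ for $f_1,f_2$ are then null, and $f$ is totally differentiable off the null set $N_1\cup N_2$. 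This reduces the theorem to the case of a function that is nondecreasing in each variable separately.

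So fix such a monotone $g$. The second step establishes the existence of the two partial derivatives almost everywhere. For each fixed $y$ the section $x\mapsto g(x,y)$ is nondecreasing, hence differentiable for almost every $x$ by Lebesgue's one-dimensional differentiation theorem; a measurability and Fubini argument then promotes this to the assertion that $\partial_x g$ exists and is finite at almost every $(x,y)\in[a,b]\times[c,d]$, and symmetrically for $\partial_y g$. Thus both first-order partials exist a.e.

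The third and decisive step is to upgrade ``partials exist a.e.'' to genuine total differentiability a.e., and this is where the main difficulty lies, since finiteness of the partials alone does not imply total differentiability. One might hope to attach to $g$ a nonnegative measure and differentiate it, but that would require the mixed second difference $\Delta_{11}g=g(x+h,y+k)-g(x+h,y)-g(x,y+k)+g(x,y)$ to keep a constant sign, a Vitali-type monotonicity that the Arzel\'a condition (which controls only the first differences $\Delta_{10}g,\Delta_{01}g$) does not supply. I would therefore instead invoke the classical theorem of Stepanov, which asserts that an arbitrary measurable function is totally differentiable at almost every point of the set
$$ L_g:=\Big\{(x,y):\limsup_{(h,k)\to(0,0)}\frac{|g(x+h,y+k)-g(x,y)|}{\sqrt{h^2+k^2}}<\infty\Big\}. $$
It then suffices to show that $L_g$ has full measure. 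To control the increment I would split
$$ g(x+h,y+k)-g(x,y)=\big[g(x+h,y+k)-g(x,y+k)\big]+\big[g(x,y+k)-g(x,y)\big], $$
so that it is a pure $x$-increment at height $y+k$ plus a pure $y$-increment, each governed by the separate monotonicity and by the a.e.-finite one-dimensional derivatives of the previous step. The heart of the matter is to prove that the set where this quotient is unbounded, intuitively the jump locus of $g$ which for a separately monotone function is carried by a countable union of monotone curves and is therefore planar-null, has measure zero. I expect this verification of the Stepanov hypothesis almost everywhere, equivalently that the non-Lipschitz set of a separately monotone function is negligible, to be the crux of the argument; once $|L_g^c|=0$ is secured, Stepanov's theorem finishes the monotone case, and the reduction of the first paragraph then yields the theorem for every Arzel\'a-BV function $f$.
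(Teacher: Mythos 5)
First, a remark on the comparison itself: the paper does not prove this statement at all. It is imported verbatim as Theorem 16 of Adams and Clarkson \cite{Raymond}, where it rests on a Burkill--Haslam-Jones type differentiability theorem for separately monotone functions. So you are attempting to reprove a nontrivial classical theorem from scratch, and the only meaningful comparison is with that classical argument.

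Your first two steps are sound: the reduction via Theorem \ref{BVK_Arzela} to a bounded $g$ with $\Delta_{10}g\ge 0$ and $\Delta_{01}g\ge 0$, and the a.e.\ existence of both partials by Lebesgue's one-dimensional theorem on the sections plus Fubini. Invoking Stepanov is also the right framework (and $g$ is measurable, since its superlevel sets are ``upper sets'' whose boundaries are monotone curves). The gap is that the entire content of the theorem sits in the one step you defer, namely that $L_g^c$ is null, and the justification you sketch for it does not work, for two distinct reasons. (a) You identify the obstruction with ``the jump locus of $g$''; but the pointwise Lipschitz condition can fail where $g$ is continuous (already in one variable the Cantor function is continuous yet non-Lipschitz at every point of the Cantor set), so showing the discontinuity set is null --- which is indeed easy for a separately monotone function, each section having at most countably many jumps --- says nothing about $L_g^c$. (b) More seriously, in your splitting the term $g(x+h,y+k)-g(x,y+k)$ is an $x$-increment of the section at level $y+k$, whereas your second step controls only the section at the level $y$ itself; nothing so far prevents the sections at nearby levels $y+k$ from having increments of size much larger than $\sqrt{h^2+k^2}$ on the interval from $x$ to $x+h$. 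Excluding this on a set of positive measure requires a quantitative Vitali-type covering argument over the whole one-parameter family of sections, exploiting that each section has total variation at most $\sup g-\inf g$; that estimate is exactly the content of the Burkill--Haslam-Jones theorem on which Adams and Clarkson rely, and it is the part missing from your argument. Until it is supplied, what you have proved is only the a.e.\ existence of the partial derivatives, which is strictly weaker than total differentiability.
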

The proof of the upcoming theorem follows from the above theorem and the construction of $T$, and  hence omitted.
\begin{theorem}
If the generating function $\phi$ is of bounded variation in the sense of Arzel\'a, then $T$ is totally differentiable almost everywhere.
\end{theorem}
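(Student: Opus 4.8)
The plan is to use that, by construction, $T$ coincides on each vertical strip $[a_{n-1},a_n]\times[c,d]$ with the function $F_n$, and that every $F_n$ inherits bounded variation in the sense of Arzel\'a from $\phi$. Granting this, the theorem of Raymond quoted just above applies on each strip to give total differentiability almost everywhere there, and a countable-union argument lifts this to almost-everywhere total differentiability of $T$ on the whole rectangle.

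First I would check that each $F_n$ is of bounded variation in the sense of Arzel\'a on $[a_{n-1},a_n]\times[c,d]$. Writing $\phi=\phi_1-\phi_2$ with $\phi_1,\phi_2$ monotone via Theorem \ref{BVK_Arzela}, I would note that $\psi_n$ is an increasing affine bijection of $[a_{n-1},a_n]$ onto $[a_0,a_1]$, so each $x\mapsto\phi_i(\psi_n(x),y)$ stays nondecreasing in $x$ while its $y$-behaviour is unchanged; hence $\frac{1}{n}\phi_i(\psi_n(x),y)$ still satisfies $\Delta_{10}\ge 0$ and $\Delta_{01}\ge 0$. The additive correction $\frac{n-1}{n}\phi(a_0,y)$ depends on $y$ only and equals $\frac{n-1}{n}\bigl(\phi_1(a_0,y)-\phi_2(a_0,y)\bigr)$ with each $\phi_i(a_0,\cdot)$ nondecreasing, so it too is a difference of monotone functions. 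Collecting the pieces exhibits $F_n$ as a difference of two monotone functions, whence $F_n$ is of bounded variation in Arzel\'a's sense by Theorem \ref{BVK_Arzela}.

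Next I would apply the cited differentiability theorem to each $F_n$, producing for every $n$ a planar null set $E_n\subset[a_{n-1},a_n]\times[c,d]$ off which $F_n$ is totally differentiable. Since $T$ agrees with $F_n$ on a full neighbourhood of every interior point of the strip, $T$ is totally differentiable at each point of $\bigl([a_{n-1},a_n]\times[c,d]\bigr)\setminus E_n$ away from the separating line $x=a_n$. Consequently the set where $T$ can fail to be totally differentiable is contained in $\bigcup_{n}E_n$ together with the countable collection of segments $\{a_n\}\times[c,d]$ and the residual segment $\{b\}\times[c,d]$ not reached by any strip.

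Finally I would observe that this exceptional set is a countable union of sets of two-dimensional Lebesgue measure zero, hence itself null, which yields total differentiability of $T$ almost everywhere. The only step that requires genuine care is the preservation of Arzel\'a bounded variation under the affine rescaling $\psi_n$ and the $y$-only correction term; this, however, reduces transparently to the monotone decomposition furnished by Theorem \ref{BVK_Arzela}, and the remaining measure-theoretic bookkeeping is routine.
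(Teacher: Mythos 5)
Your proposal is correct and is exactly the argument the paper intends but omits ("follows from the above theorem and the construction of $T$"): since $T$ itself is generally \emph{not} of bounded variation, one must apply the Adams--Clarkson differentiability theorem strip by strip to the functions $F_n$, which you rightly verify are of bounded variation in Arzel\'a's sense via the monotone decomposition of $\phi$ and the increasing affine map $\psi_n$, and then conclude by discarding a countable union of null sets. Your write-up in fact supplies the one nontrivial detail (Arzel\'a bounded variation of each $F_n$) that the paper leaves implicit.
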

Theorems \ref{constructhm1} and \ref{constructhm2} provide examples for a bivariate continuous function that is not of bounded variation (in the sense of
Arzel\'a) with graph having box dimension $2$. Let us consider the region $[0,1] \times [0,1]$, $a_0=0, a_1=0.5 , \dots $ and the generating function defined on $[0, 0.5] \times [0,1]$ as
$\phi(x,y) =  x (x-0.5) \sin y $, $ \phi(x,y)= \sin \big(x(x-0.5)  \big)$. Correspondingly, we obtain the bivariate continuous functions that  are not of bounded variations with graphs having fractal dimension $2$ depicted in the following figures.

\begin{figure}[h!]
\centering
\epsfig{file=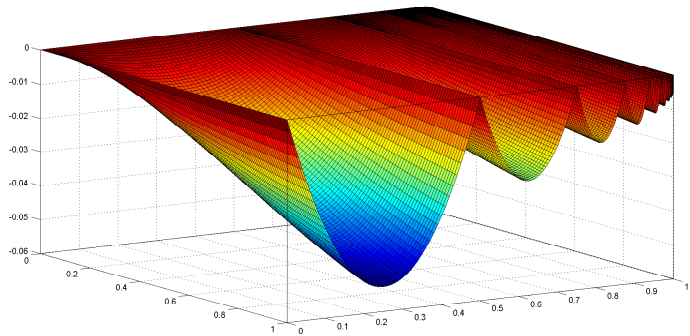,scale=0.3}
 \caption{Continuous function $T$ of unbounded variation with generating function $\phi(x,y) =  x (x-0.5) \sin y $.}
 \end{figure}
 \begin{figure}[h!]
\centering
 \epsfig{file=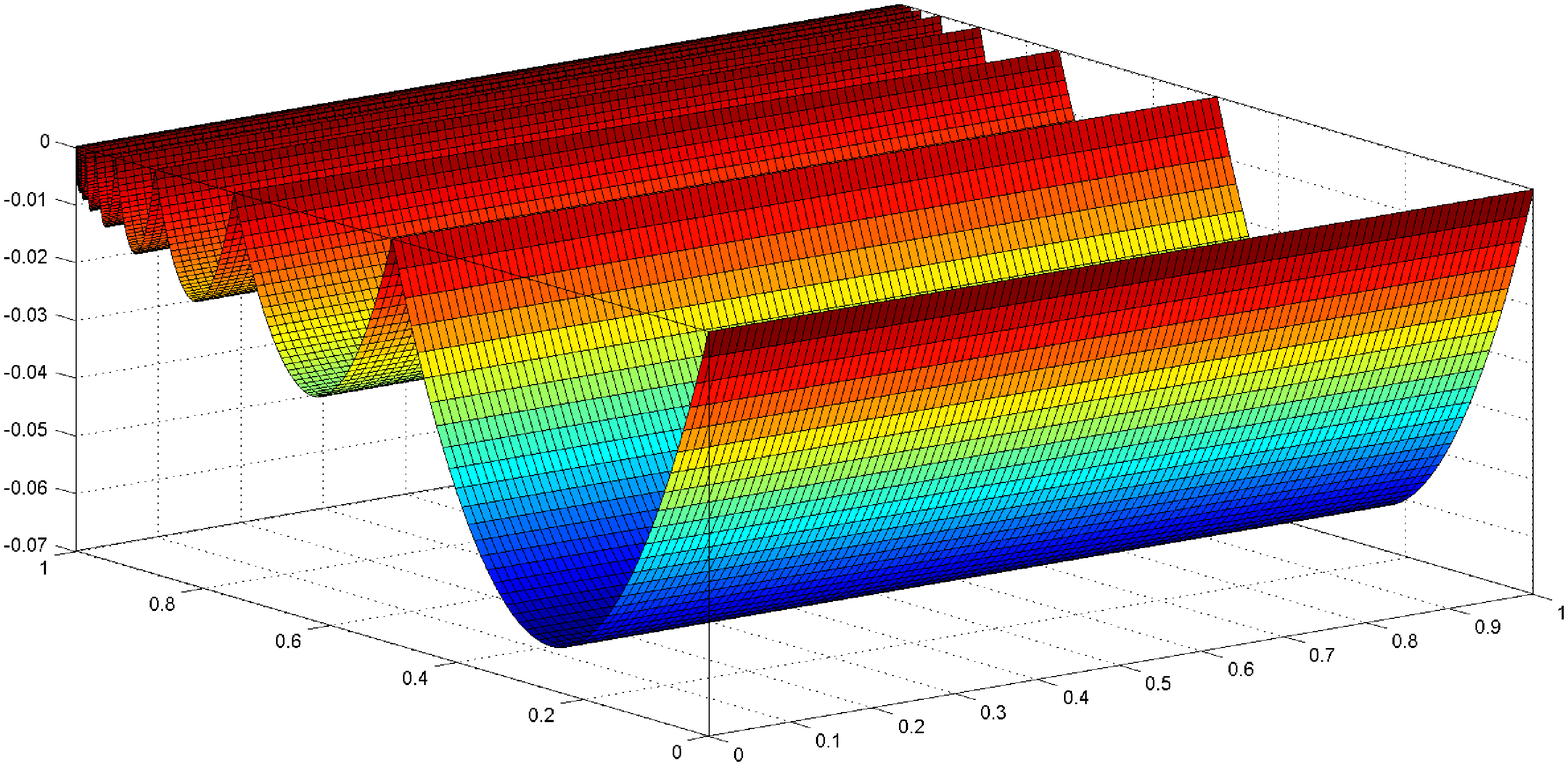,scale=0.3}
 \caption{Continuous function $T$ of unbounded variation with generating function $ \phi(x,y)= \sin \big(x(x-0.5)  \big)$}
 \end{figure}

Next we present a simple example of a function which is neither continuous nor of bounded variation, with its graph
having the box dimension and Hausdorff dimension $2.$
\begin{example}
Let $A=[a,b] \times [c,d]$ and define a function $f:A \rightarrow \mathbb{R}$ as follows.
 \begin{equation*}
 f(x,y) =
 \begin{cases}
 0 \quad \text{if} \quad x, y ~~\text{are rational~ numbers,}\\
 1 \quad \text{otherwise}.
 \end{cases}
 \end{equation*}
 We may write the graph of the function $f$ as
 \begin{equation*}
 \begin{split}
 G(f)=&~ \big\{(x,y,0): (x,y) \in A \cap (\mathbb{Q} \times \mathbb{Q}) \big\} \cup \big\{(x,y,1): (x,y) \in A  \cap (\mathbb{Q} \times \mathbb{Q})^c \big\}\\ := &~G_1 \cup G_2
 \end{split}
 \end{equation*}
 The set $G_1$ is countable, so the Hausdorff measure is zero \cite{Fal}. Using the countable additive property of Hausdorff measure \cite{Fal}, we have $$\dim_H (G(f)) = 2.$$ Therefore $2 = \dim_H (G(f)) \le \underline{\dim}_B (G(f)) \le \overline{\dim}_B (G(f)).$ Using a property of the upper box dimension \cite{Fal}, we have  $$\overline{\dim}_B(G_1)=\overline{\dim}_B( \overline{G_1})=2, \quad \overline{\dim}_B(G_2)=\overline{\dim}_B( \overline{G_2})=2.$$ Since $\overline{\dim}_B$ is finitely stable, we get $$\overline{\dim}_B (G(f))=2.$$ Therefore, $\dim_B (G(f))=\dim_H (G(f))=2.$
\end{example}

\subsection*{Acknowledgements}
 The first author thanks the University Grants
  Commission (UGC), India for financial support in the form of a Senior Research Fellowship.

\bibliographystyle{amsplain}

\end{document}